\documentclass[a4paper]{amsart}

\usepackage[margin=1.40 in]{geometry}

\setlength{\parindent}{0pt}
\setlength{\parskip}{7pt}

\usepackage{amsmath, amssymb, amsthm}
\usepackage{mathabx}
\usepackage{pstricks, pst-node}
\usepackage{cite}
\usepackage{graphicx}
\usepackage{pinlabel}
\usepackage{enumerate}
\usepackage{verbatim}
\usepackage{epigraph}

\newcommand{\N}{\mathbb{N}}

\newcommand{\modulus}[1]{\left|#1\right|}
\newcommand{\PCL}{\operatorname{PCL}}


\newcommand{\wh}[1]{\widehat{{#1}}}

\newtheorem{thm}{Theorem}[section]

\newtheorem{lem}[thm]{Lemma}
\newtheorem{prop}[thm]{Proposition}

\newtheorem{thmspecial}{Theorem}
\newtheorem{corspecial}[thmspecial]{Corollary}

\newtheorem{propspecial}[thmspecial]{Proposition}
\theoremstyle{definition}
\newtheorem{defn}[thm]{Definition}

\theoremstyle{remark}

\newtheorem{question}[thm]{Question}

\newcommand{\abs}[1]{\lvert#1\rvert} 

\newcommand{\gen}[1]{\left\langle#1\right\rangle}

\newcommand{\ga }{\Gamma}

\newcommand{\e }{\varepsilon }
\renewcommand{\kappa }{\varkappa}

\renewcommand{\d }{{\rm d} }

\newcommand{\cH }{\mathcal H}

\newcommand{\cQ }{\mathcal Q}
\newcommand{\cP }{\mathcal P}

\newcommand{\grate}{\mathbf{h}}
\newcommand{\cgrate}{\mathbf{k}}

\newcommand{\ball}{\mathbb{B}}

\newcommand{\pcconj}{PC--conjugator}
\newcommand{\CLF}{\operatorname{CLF}}
\newcommand{\qPCL}{\operatorname{PCL}_{G,X,\lambda,c}}
\newcommand{\qPCLp}{\operatorname{PCL}_{G,Y,\lambda',c'}}

\title[Permute and Conjugate]{Permute and conjugate: the conjugacy problem in relatively hyperbolic groups}
\address{Vanderbilt University,
	Department of Mathematics,
	1326 Stevenson Center,
	Nashville, TN 37240, USA.}
\author{Yago Antol\'{i}n}
\email[Yago Antol\'{i}n]{yago.anpi@gmail.com}

\author{Andrew Sale}
\email[Andrew Sale]{andrew.sale@some.oxon.org}

\date{\today}

\begin{document}

	\begin{abstract}
		Modelled on efficient algorithms for solving the conjugacy problem in hyperbolic groups, we define
		and study the permutation conjugacy length function. This function estimates the length
		of a short conjugator between words $u$ and $v$, up to taking cyclic permutations.		
		This function might be bounded by a constant, even in the case when the standard conjugacy length
		function is unbounded. We give  applications to the complexity of the conjugacy problem, estimating conjugacy growth rates, and languages. Our main result states that for a relatively hyperbolic group, the permutation conjugacy length function is bounded by the permutation conjugacy length function of the parabolic subgroups.
		\\ 		 
		\flushright{We came, we permuted, we conjugated.}
	\end{abstract}
	
\keywords{relatively hyperbolic groups, Cayley graphs, conjugacy growth, conjugacy problem, conjugacy length function}

\subjclass[2010]{20F65, 20F10, 20F67}
	\maketitle
		
		\vspace{-10mm}
			\section{Introduction}\label{sec:intro}

	Max Dehn's decision problems have long been among the most fundamental problems in combinatorial and geometric group theory. Especially with the development of geometric techniques, we have seen a surge in progress in our understanding of the word and conjugacy problems over recent years.
	
	We develop a geometric approach to solving the conjugacy problem that runs in a similar vein to that of the conjugacy length function, but has the potential to provide a computationally faster algorithm than the na\"ive algorithm associated to the conjugacy length function. It also has applications to computing the conjugacy growth rate and deciding the regularity of conjugacy languages.

	We let $G$ be group with finite generating set $X$. 
	We begin by defining the conjugacy length function for $G$. Suppose that $u,v$ are words on $X$ which represent conjugate elements of $G$. An element $w$ of $G$ is a \emph{conjugator} for $u,v$ if $wu=vw$.
	
	\begin{defn}
		Define the \emph{conjugacy length function} of $G$ to be the function
		$$\CLF_{G,X}:\N \to \N$$
		which takes $n\in \N$ to the following value:
		$$\max\limits_{\ell(u)+\ell(v) \leq n} \Big\{ \min\{ \ell(w) \mid \textrm{ $w$ is a conjugator for $u,v$}\}  \Big\}.$$
	\end{defn}
	
	The conjugacy length function is discussed in certain classes of groups in \cite{Saleext,Salelie,Salewreath}. See the introduction of these papers for further references.
	
	We remark that the conjugacy length function is stable when changing the generating set, up to the usual asymptotic equivalence of functions as set out in Definition \ref{def:asymequiv} below. We may therefore omit the generating set from the notation and just write $\CLF_G$ for the conjugacy length function of $G$.
	
	\begin{defn}\label{def:asymequiv}We consider  functions up to
		the following equivalence: we say $f \preceq g$ if there exists a constant $C$ such that
		$f(n)\leq  Cg(Cn)+C$  for all $n \in \mathbb{N}$, and we say $f \asymp g$ if both $f \preceq g$ and $g \preceq f$.
	\end{defn}
	
	We now move on to define a variation on $\CLF_G$, which we call the permutation conjugacy length function (PCL) of $G$. A key difference between this and the conjugacy length function is that PCL is defined on words, whereas the conjugacy length function may be defined using elements.
	
	Whenever we consider a word $w$ on $X$, we will use the notation $w$ to denote both the word and the element in $G$ it represents.  In case we need to emphasize that two words are equal, we use $\equiv$ to denote letter-by-letter equality. 
	
	Given a word $u \equiv x_1\ldots x_r$ on $X$, a \emph{cyclic permutation} of $u$ is a word of the form $u' \equiv x_i\ldots x_rx_1\ldots x_{i-1}$, for any $i=1,\ldots , r$.
	
	Suppose $u$ and $v$ are words on $X$ which represent conjugate elements of $G$. An element $w$ is a \emph{permutation conjugacy conjugator}, or more briefly a \emph{\pcconj}, for $u$ and $v$ if there exist cyclic permutations $u'$ and $v'$ of $u$ and $v$ respectively such that $wu'=v'w$.
	
	\begin{defn}\label{def:pcl}
		Let $u,v$ be words on $X$ representing conjugate elements of $G$. Define  $\PCL_{G,X}(u,v)$ to be the length of the shortest conjugator between all cyclic permutations of the  words $u$ and $v$. That is:
		$$\PCL_{G,X}(u,v) := \min\{\ell(w) \mid w \textrm{ is a \pcconj{} for $u$ and $v$}\}.$$
		The \emph{permutation conjugacy length function} of $G$ is the function
		$$\PCL_{G,X}(n):=\max\{\PCL_{G,X}(u,v) \,\mid \, u,v \text{ geodesics satisfying }\ell(u)+\ell(v)\leq n\}.$$
	\end{defn}
	
	We discuss below, in Section \ref{sec:change gen set}, its behaviour when changing generating set.
	
	The permutation conjugacy length function is closely related to the standard conjugacy length function. Indeed, we have
	\begin{equation}\label{eq: PCL and CLF}
	\PCL_{G,X}(n) \leq \CLF_{G,X}(n) \leq \PCL_{G,X}(n) + \frac{n}{2}.\end{equation}
	The first inequality is clear from the definitions, while the latter is realised as follows: if $u,v$ have a \pcconj{} $w$ such that $wu'=v'w$, then we obtain a conjugator $w_0$ for $u,v$ which acts first by cyclically permuting $u$ to $u'$, then conjugating by $w$ to get $v'$, then cyclically permuting to $v$.

	A consequence of these inequalities is that if we know the conjugacy length function to be super-linear, then the same will also apply to the permutation conjugacy length function. Therefore, studying the permutation conjugacy length function is only of interest when the group in question has (sub)linear conjugacy length function. 
	The simplest example of a group $G$ satisfying  $$\PCL_{G,X}\not\asymp \CLF_{G,X}\asymp n,$$ is a non-abelian free group. Proposition \ref{propspecial:pcl hyp}   shows this  can be extended to the family of  hyperbolic groups.

	In certain situations, studying the permutation conjugacy length function may give a better understanding of conjugacy in a group than one gets from the conjugacy length function. For example, it may lead to a  faster algorithm solving the conjugacy problem than one may achieve by na\"ively using $\CLF_G$. Indeed, notions similar to the PCL are used in recent algorithms giving fast solutions in hyporbolic and relatively hyperbolic groups \cite{Bumagin14,BridsonHowie,EpsteinHolt,BuckleyHolt}. These methods stemmed from a lemma of Bridson and Haefliger \cite[Chapter III.$\Gamma$ Lemma 2.11]{BH} which asserts that, for hyperbolic groups, PCL is bounded by a constant when restricting to words $u,v$, all of whose cyclic permutations are local geodesics. We remove this restriction by studying PCL for all (quasi-)geodesic words.
	
	The main result of this paper is the following.
	
	\begin{thmspecial}\label{thmspecial:pcl}
		Let $G$ be a finitely generated group, hyperbolic relative to subgroups $\{H_\omega\}_{\omega \in \Omega}$. Then there exists a finite generating set $X$ such that $\langle X\cap H_\omega\rangle =H_\omega$ for $\omega \in \Omega$ and
		$$\PCL_{G,X}(n) \leq 2\max\limits_{\omega\in\Omega}\{\PCL_{H_\omega,X\cap H_\omega}(n)\} + K$$
		for some constant $K$. 
	\end{thmspecial}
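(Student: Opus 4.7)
My strategy is to work in the relative Cayley graph $\Gamma = \Gamma(G, X\sqcup\mathcal H)$ and exploit the bounded coset penetration (BCP) property of relatively hyperbolic groups. Let $u, v$ be $X$-geodesic words with $\ell(u)+\ell(v)\le n$ representing conjugate elements, and let $w$ realise $\PCL_{G,X}(u,v)$, so that for some cyclic permutations $u', v'$ of $u,v$ we have $wu'=v'w$. I consider the geodesic quadrilateral $Q$ in $\Gamma$ whose sides are labelled by $u'$, $v'^{-1}$ and two copies of $w$, and analyse it via the fact that such polygons in relatively hyperbolic groups are $\delta$-thin relative to parabolic cosets: each side travels uniformly close to the others, except where two sides jointly penetrate the same left coset $gH_\omega$.

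\textbf{Main reduction.} The heart of the argument is to show that any long portion of $w$ either lies inside a single parabolic coset $gH_\omega$ -- where it performs a conjugation between two parabolic subwords of $u'$ and $v'$ -- or can be shortened by a better choice of cyclic permutation. Using BCP, any $H_\omega$-syllable of $u'$ whose $X$-length exceeds a BCP-dependent constant must be matched to a nearby $H_\omega$-syllable of $v'$ via a portion of $w$ that travels inside $gH_\omega$. Cyclically rotating $u$ and $v$, I may assume that the largest such matched pair of syllables sits at the two endpoints of $w$, so that, up to a bounded hyperbolic error at each endpoint, the interior of $w$ realises a conjugation inside $H_\omega$ between two explicit $X\cap H_\omega$-subwords $\widetilde u$ and $\widetilde v$ extracted from those parabolic syllables.

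\textbf{Final bounds and main obstacle.} The parabolic interior of $w$ is then shortened by applying $\PCL_{H_\omega, X\cap H_\omega}$ to the pair $(\widetilde u, \widetilde v)$, whose combined $X\cap H_\omega$-length is at most $n$; this contributes one factor of $\PCL_{H_\omega}(n)$. A second factor (possibly in a different peripheral subgroup, hence the $\max_\omega$) arises because the remaining quadrilateral may still contain a long parabolic piece at the opposite corner of $w$, handled by an identical second application of parabolic PCL. The additive constant $K$ absorbs all bounded hyperbolic transitions, the conversion between $(X\sqcup\mathcal H)$-length and $X$-length, and the ``corner'' cost of aligning cyclic permutations inside each parabolic. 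The hardest step I anticipate is the simultaneous alignment: one must exhibit a \emph{single} cyclic rotation of $u$ (and of $v$) that places both long parabolic syllables at the correct corners while leaving no long parabolic excursion hidden in the middle of $w$. This is where a quasi-geodesic strengthening of the Bridson--Haefliger local-geodesic lemma, combined with careful BCP bookkeeping along $Q$ and with the hypothesis that $\langle X\cap H_\omega\rangle = H_\omega$, becomes essential.
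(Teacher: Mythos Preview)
Your setup---working in the relative Cayley graph with a minimal-length PC-conjugator and invoking BCP---is the right starting point, but you have misidentified the mechanism that produces the $\PCL_{H_\omega}$ term, and as a result the ``hardest step'' you anticipate is a red herring. The parabolic contribution does \emph{not} come from matching long $H_\omega$-syllables of $u'$ to $H_\omega$-syllables of $v'$. In fact, when neither $u$ nor $v$ has a cyclic permutation lying \emph{entirely} in some $H_\omega$, one shows that $\PCL_{G,X}(u,v)$ is bounded by an absolute constant $K$, no matter how long the individual parabolic syllables of $u$ and $v$ happen to be. The $\PCL_{H_\omega}$ term enters only in the degenerate case where (a cyclic permutation of) $u$ or $v$ is itself parabolic; the factor $2$ is simply because this can happen for both $u$ and $v$.

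The structure you are missing is this. Write a minimal $S$-length PC-conjugator as $w\equiv s_1\cdots s_n$ with each $s_i\in S=X\cup\mathcal H$. First, the hyperbolicity of $\Gamma(G,S)$ plus a pigeonhole on $X$-bounded elements bounds $n=\lvert w\rvert_S$ by a constant depending only on $(G,X)$. Second, for $1<i<n$, a short-cut argument forces the edge labelled $s_i$, if not isolated in the conjugacy polygon, into a \emph{horizontal band} joining $w_{i-1},w_i$ to $v'w_{i-1},v'w_i$; the two cross-edges of the band are then isolated components with bounded $X$-length, so $s_i$ conjugates two $X$-short elements of some $H_\omega$ and can itself be taken $X$-short. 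Thus every middle letter of $w$ has bounded $X$-length automatically---there is no ``long parabolic excursion hidden in the middle of $w$'' to align away. Only the end letters $s_1,s_n$ can be $X$-long, and a further short-cut argument shows this happens precisely when the adjacent word $u$ (resp.\ $v$) is, up to cyclic permutation, in some $H_\omega$; in that case the horizontal band at that end exhibits $s_n$ (resp.\ $s_1$) as conjugating the parabolic element $u'\in H_\omega$ to an $X$-bounded element of $H_\omega$, and \emph{that} conjugation is what one shortens using $\PCL_{H_\omega,X\cap H_\omega}$.
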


	We remark that the lower bound
		$$\max\limits_{\omega\in\Omega}\{\PCL_{H_\omega,X\cap H_\omega}(n)\} \preceq \PCL_{G,X}(n)$$
	follows from the definitions and the fact that the parabolic subgroups $H_\omega$ are almost malnormal in $G$ and isometrically embedded.
		
		We note that  $\CLF_G$ is a constant function if and only if the group is finite-by-abelian (see \cite{BFC}). The class of groups with constant $\PCL$ is significantly bigger, including groups hyperbolic relative to finite-by-abelian subgroups, as Theorem \ref{thmspecial:pcl} shows. We remark that there is a virtually abelian group, proposed by Holt, which we can show has linear $\PCL$.

	An important family of groups that are relatively hyperbolic groups are limit groups, where the parabolic subgroups $H_\omega$ are abelian (see \cite{Dahmani}). Theorem \ref{thmspecial:pcl} therefore gives a constant upper bound for the PCL of these groups.
		
	The complexity of the conjugacy problem in relatively hyperbolic groups has already been well-studied, including work of Bumagin \cite{Bumagin14} and O'Conner \cite{Zoe}. The conjugacy length function, when it is polynomial in the subgroups $H_\omega$, is investigated by Ji, Ogle and Ramsey \cite{JiOgleRamsey}.
	
	An immediate consequence of Theorem \ref{thmspecial:pcl}, together with  observation  \eqref{eq: PCL and CLF} above, is a generalisation of the result of Ji, Ogle and Ramsey \cite{JiOgleRamsey} concerning the conjugacy length function in relatively hyperbolic groups.
	
	\begin{corspecial}
		Let $G$ be a finitely generated group, hyperbolic relative to subgroups $\{H_\omega\}_{\omega \in \Omega}$. Then, either $\CLF_G(n)$ is sublinear or
				$$\CLF_G(n) \asymp  \max\limits_{\omega \in \Omega} \{\CLF_{H_\omega}(n)\} + n.$$
	\end{corspecial}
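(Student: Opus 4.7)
The plan is to deduce the corollary quickly by chaining three ingredients: Theorem \ref{thmspecial:pcl}, the lower bound for $\PCL_{G,X}$ in terms of the $\PCL_{H_\omega,X\cap H_\omega}$ that is stated in the remark immediately following it, and the two-sided inequality \eqref{eq: PCL and CLF} applied both to $G$ and to each parabolic subgroup. No new geometric input is needed.

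First I would combine Theorem \ref{thmspecial:pcl} with the lower bound $\max_{\omega}\PCL_{H_\omega,X\cap H_\omega}(n) \preceq \PCL_{G,X}(n)$ to obtain the asymptotic equivalence
$$\PCL_{G,X}(n) \asymp \max_{\omega \in \Omega}\PCL_{H_\omega,X\cap H_\omega}(n).$$
Next I would apply \eqref{eq: PCL and CLF} inside each parabolic subgroup, which gives $\PCL_{H_\omega}(n) \leq \CLF_{H_\omega}(n) \leq \PCL_{H_\omega}(n) + n/2$. Taking the maximum over $\omega$ and adding a linear term absorbs the $n/2$ error and produces
$$\max_{\omega}\PCL_{H_\omega}(n) + n \asymp \max_{\omega}\CLF_{H_\omega}(n) + n.$$

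Then I would apply \eqref{eq: PCL and CLF} to $G$ itself and split into two cases. If $\CLF_{G}(n)$ is sublinear, we land in the first alternative of the corollary and there is nothing further to prove. Otherwise $\CLF_{G}(n) \succeq n$, so the linear term in $\PCL_{G,X}(n) + n$ is dominated by $\CLF_{G,X}(n)$, and the two-sided inequality $\PCL_{G,X}(n) \leq \CLF_{G,X}(n) \leq \PCL_{G,X}(n) + n/2$ upgrades to $\CLF_{G,X}(n) \asymp \PCL_{G,X}(n) + n$. Chaining this with the two equivalences established above yields
$$\CLF_G(n) \asymp \max_{\omega \in \Omega}\CLF_{H_\omega}(n) + n,$$
which is the desired conclusion.

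There is essentially no obstacle: the difficult work has already been carried out in Theorem \ref{thmspecial:pcl} and in establishing \eqref{eq: PCL and CLF}. The only points I would be careful about are (i) invariance of $\PCL$ under change of generating set, so that suppressing $X$ from the notation is justified (this is handled by the discussion in Section \ref{sec:change gen set}), and (ii) keeping track of the fact that the asymptotic equivalence $\asymp$ in Definition \ref{def:asymequiv} allows both a multiplicative and an additive constant, which is what permits absorbing the $n/2$ term in the case split above.
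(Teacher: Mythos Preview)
Your argument is correct and is exactly the approach the paper has in mind: it states that the corollary is ``an immediate consequence of Theorem \ref{thmspecial:pcl}, together with observation \eqref{eq: PCL and CLF},'' and you have simply spelled out that deduction in detail.

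One small correction to your caveat (i): Section \ref{sec:change gen set} does \emph{not} establish that $\PCL$ is invariant under change of generating set (indeed, the question posed there asks whether it can fail). Fortunately you do not need this. What you actually use is that $\CLF$ is invariant under change of generating set, so that $\CLF_{G,X}$ and $\CLF_{H_\omega,X\cap H_\omega}$, for the specific $X$ supplied by Theorem \ref{thmspecial:pcl}, may be replaced by the generator-free $\CLF_G$ and $\CLF_{H_\omega}$; the intermediate $\PCL$ quantities always carry the fixed generating set $X$ and never need to be compared across generating sets.
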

	
	In the preceding result, we remark that $\CLF_G$ may be sublinear only in certain cases when the relative hyperbolic structure is degenerate.
	
	The geometric techniques we use to determine a bound on the permutation conjugacy length function for relatively hyperbolic groups give the following:
	
	\begin{propspecial}\label{propspecial:pcl hyp}
		Let $G$ be a hyperbolic group. Then for every finite generating set $X$ there exists a constant $C=C(X,\delta)$ such that
				$$\PCL_{G,X}(n) \leq C.$$
	\end{propspecial}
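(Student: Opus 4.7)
The plan is to combine a thin-quadrilateral argument in $\Gamma(G,X)$ with an explicit cyclic-permutation identity. Let $u, v$ be geodesic words on $X$ representing conjugate elements of $G$ with $\ell(u) + \ell(v) \leq n$, and let $g \in G$ be a shortest conjugator, so $gu = vg$. I form the geodesic quadrilateral $Q$ with vertices $1$, $g$, $gu = vg$, $v$ and consecutive sides labelled $g$, $u$, $g^{-1}$, $v^{-1}$.

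The algebraic core is the following identity. If $p = gu_1$ lies on the $u$-side of $Q$ (for a factorisation $u \equiv u_1 u_2$) and $q = v_1$ lies on the $v$-side (for $v \equiv v_1 v_2$), then setting $u' \equiv u_2 u_1$, $v' \equiv v_2 v_1$ and $w := v_1^{-1} g u_1$, the relation $gu = vg$ gives
\[
w u' \;=\; v_1^{-1} g u_1 \cdot u_2 u_1 \;=\; v_1^{-1} g u \cdot u_1 \;=\; v_1^{-1} v g u_1 \;=\; v_2 g u_1 \;=\; v' w,
\]
so $w$ is a \pcconj{} for $u, v$ whose length equals $\dx(q, p)$. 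Hence any close cross-pair $(p, q)$ between the $u$- and $v$-sides of $Q$ produces a short \pcconj{}.

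It remains to produce such a cross-pair of bounded distance using only $\delta$-hyperbolicity. If the length of $g$ is at most some constant $C_0 = C_0(\delta, X)$, I take $w = g$ with $u' \equiv u$ and $v' \equiv v$. Otherwise $Q$ is $2\delta$-thin, so every point of a side is within $2\delta$ of a point on the union of the other three sides. I would then argue that the close pairs supplied by thinness cannot all sit between the two $g$-sides: such a pair at corresponding interior parameters of $g$ and $g^{-1}$ would let one splice out the matched portion and produce a strictly shorter conjugator, contradicting minimality of $g$. A pigeonhole along the $g$-sides, together with control within a bounded neighbourhood of the four corners of $Q$, then forces the existence of a $u$--$v$ cross-pair within distance $L = L(\delta)$, and the proposition follows with $C = \max(C_0, L)$.

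The hard part will be this final step: making the minimality-driven pigeonhole precise, handling the corner regions cleanly, and ensuring the extracted cross-pair genuinely lies on the $u$- and $v$-sides, with both parameters bounded away from the corners, rather than being absorbed into the two long $g$-sides. This is in the same spirit as the classical argument of \cite[Ch.~III.$\Gamma$, Lemma~2.11]{BH} for cyclically locally geodesic words, but performed for arbitrary geodesic $u, v$ at the cost of passing to cyclic permutations on both sides.
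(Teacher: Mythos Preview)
Your cross-pair identity is correct and is exactly the mechanism one wants: a close pair between the $u$--side and the $v$--side of a conjugacy quadrilateral produces a short \pcconj{}. The gap is in the step you yourself flag as hard, and it is not merely a matter of bookkeeping.

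The problem is that you start from a shortest \emph{conjugator} $g$ rather than a shortest \pcconj{}. Since $\CLF_G$ is genuinely linear for non-elementary hyperbolic groups, $\modulus{g}_X$ can be of order $n$, so $Q$ may be a long thin quadrilateral with the $u$-- and $v$--sides at opposite ends. Thinness then pairs interior points $g_i$ of one $g$--side with points $vg_{j}$ of the other, but nothing forces $j=i$: a direct estimate only gives $\lvert i-j\rvert \le 2\delta + \max(\ell(u),\ell(v))$, which depends on $n$. Crucially, a shortcut from $g_i$ to $vg_j$ with $j\neq i$ yields (via your identity, after re-permuting) a \pcconj{}, \emph{not} a conjugator; so it does not contradict minimality of $g$, and the splicing you need in order to run a pigeonhole is unavailable. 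In fact the statement ``there is a $u$--$v$ cross-pair within bounded distance in the $g_{\min}$--quadrilateral'' is essentially equivalent to $\PCL_{G,X}$ being bounded, so one should not expect to extract it from thinness together with minimality of $g$ alone.

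The paper's proof sidesteps this by taking $w$ to be a \pcconj{} of minimal $X$--length from the outset and working in the associated hexagon. Now every shortcut --- whether to a $u/v$--side or to a non-synchronous point on the opposite $w$--side --- produces another \pcconj{}, hence genuinely contradicts minimality of $w$. This is what forces $d_X(w_i,v'w_i)\le 8\delta$ in the middle range (Lemma~\ref{lem:quasi-pcl on hyperbolic Cayley graph}); a pigeonhole on the elements $w_i^{-1}v'w_i\in\ball_X(8\delta)$ then bounds $\modulus{w}_X$ by $\modulus{\ball_X(8\delta)}+8\delta$. The minimal change to your outline that makes it go through is therefore to replace ``shortest conjugator $g$'' by ``shortest \pcconj{} $w$'' everywhere; your cross-pair identity then becomes precisely the reason the shortcuts in Figure~\ref{fig:hexagon2} are legitimate.
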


	We note also that Proposition \ref{propspecial:pcl hyp} is a generalisation of the aforementioned lemma of Bridson and Haefliger \cite{BH}, extending the constant bound to all geodesic words. When restricting to cyclic geodesics, this property is also known as the ``bounded conjugacy diagram'' property (BCD) in \cite{AC14}, where it was studied for relatively hyperbolic groups.
		
	Proposition \ref{propspecial:pcl hyp} may of course be viewed as a corollary of Theorem \ref{thmspecial:pcl}, however this would be a rather convoluted way to prove it. Indeed, in Section \ref{sec:hyp cayley graphs} below, we give a short proof of this fact, en route to proving the result for relatively hyperbolic groups.

	The structure of the paper is as follows. In Section \ref{sec:pcl} we  discuss some applications of the permutation conjugacy length function, namely complexity, conjugacy growth, and regular languages. We also investigate its behaviour under a change of generating set.
	
	In Section \ref{sec:hyp cayley graphs} we set-up what may be viewed as a general strategy for dealing with PCL in groups with a hyperbolic Cayley graph.
		Section \ref{sec:rel hyp} is devoted to defining relatively hyperbolic groups and collecting useful results from the literature.
	Finally, Section \ref{sec:proof} gives the proof of Theorem \ref{thmspecial:pcl}.

	As mentioned before, $\PCL$ is defined on words. It is worth mentioning here that throughout this paper, we assume that generating sets generate  groups as  monoids. In particular, we don't require generating sets to be closed under inversion.

\medskip
{\bf Acknowledgements:} The authors wish to thank M.{} Bestvina, L.\ Ciobanu, T.\ Riley, and R.{} Wade for various discussions during the writing of this paper, and to the latter also for comments on a draft version of this paper.

\section{The permutation conjugacy length function and its applications}\label{sec:pcl}

\subsection{Complexity of the conjugacy problem}

If $\CLF_G$ is bounded above by a computable function and $G$ has solvable word problem, then one can solve the conjugacy problem in $G$.
Indeed, given elements $u$ and $v$, with $n=\modulus{u}_X+\modulus{v}_X$, one just needs to check if an element in $\ball_X(\CLF_G(n))$,
the ball of radius $\CLF_G(n)$, conjugates $u$ to $v$.
Note that, even in the case when $\CLF_G(n)$ is bounded by a linear function, this na\"ive algorithm is far from being efficient and will run in exponential time, with respect to $n$, for groups of exponential growth.

A potentially important application of the permutation conjugacy length function is that it can make the previous  algorithm more efficient. When PCL is bounded by a constant, this algorithm will be almost as fast as the word problem.

\begin{prop}\label{prop:complexity}
	Suppose $G$ is generated by a finite set $X$ and that there is an algorithm solving the word problem in time $O(f(n))$. 
	Then there is an algorithm which takes as input two geodesic words $u,v$ and determines whether they represent conjugate elements of $G$, and if they do it will give a conjugator, in time
			$$O\bigg(n^2\modulus{\ball_X(\PCL(n))}f\big(n+2\PCL(n)\big)\bigg)$$
	where $n= \ell(u)+\ell(v)$ and $\PCL(n) = \PCL_{G,X}(n)$.
	
	In particular, if $\PCL(n) \leq A$ then the algorithm runs in time
			$$O\big(n^2f(n+2A)\big).$$

\end{prop}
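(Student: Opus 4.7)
The plan is a direct brute-force search guided by the definition of $\PCL$. On input $u,v$ with $n=\ell(u)+\ell(v)$, I would loop over all pairs $(u',v')$ of cyclic permutations of $u$ and $v$ (at most $\ell(u)\ell(v)\le n^{2}/4$ such pairs), and for each pair loop over every element $w\in\ball_X(\PCL(n))$, testing the equality $wu'=v'w$ in $G$. By the very definition of $\PCL$, such a triple exists precisely when $u$ and $v$ are conjugate in $G$, so the algorithm decides conjugacy. A successful triple yields an explicit conjugator for the original words $u,v$ by the construction following inequality \eqref{eq: PCL and CLF}: prepend to $w$ the prefix of $u$ realising the cyclic shift $u\to u'$, and append the inverse of the corresponding prefix of $v$ realising $v'\to v$.

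To execute the inner loop I would first enumerate $\ball_X(\PCL(n))$ by listing all words over $X$ of length at most $\PCL(n)$ and using the word-problem oracle to discard duplicates, producing one representative word per group element. The test $wu'\stackrel{?}{=}v'w$ is then carried out by submitting the single word $wu'w^{-1}(v')^{-1}$, which has length at most $n+2\PCL(n)$, to the word-problem algorithm; this call costs $O(f(n+2\PCL(n)))$. The algorithm therefore performs $O\bigl(n^{2}\cdot|\ball_X(\PCL(n))|\bigr)$ such calls, yielding the stated bound. When $\PCL(n)\le A$ is a constant, $|\ball_X(A)|$ is also a constant absorbed into the $O(\cdot)$ notation, and the bound collapses to $O(n^{2}f(n+2A))$.

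There is no real obstacle here, only bookkeeping to verify. One must check that enumerating $\ball_X(\PCL(n))$ and then reconstructing a conjugator for $u,v$ out of a found triple $(u',v',w)$ costs nothing beyond what is already counted. The ball may be enumerated once at the start, amortising its cost across the $O(n^{2})$ outer iterations; and recovering a conjugator for $u,v$ from the conjugator $w$ for $u',v'$ requires only concatenation of subwords of $u$ and $v$, with no extra word-problem calls. Correctness and the running-time count are otherwise immediate from the definition of $\PCL_{G,X}(n)$ and the hypothesis on the word-problem complexity.
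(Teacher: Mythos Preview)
Your proposal is correct and follows essentially the same approach as the paper: loop over the $O(n^2)$ pairs of cyclic permutations, and for each pair test every candidate conjugator in $\ball_X(\PCL(n))$ via a single word-problem call on a word of length at most $n+2\PCL(n)$. The paper's proof is terser and omits the bookkeeping you spell out (enumerating the ball, reconstructing a conjugator for $u,v$ from one for $u',v'$), but the algorithm and complexity analysis are identical.
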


\begin{proof}
We use the algorithm solving the word problem to check	all $n^2$ combinations of cyclic permutations of $u$ and $v$, in each case checking all elements in the radius $\PCL(n)$ ball in $G$ to see if it is a \pcconj{} for $u,v$. Each check is an application of an algorithm solving the word problem for an input word of length at most $n+2\PCL(n)$.
\end{proof}

	Observe that if $\PCL_{G,X}(n)$ is bounded by a logarithm and if $f$ is polynomial, then Proposition \ref{prop:complexity} gives a polynomial-time algorithm.

\subsection{Application to conjugacy growth rate}

The {\it growth function of $G$ with respect to $X$} is the function 
 $$\gamma_{G,X} (n) := \left|\ball_X(n)\right|.$$ Let $\sim_c$ denote the $G$--conjugacy equivalence relation, that is $u \sim_c v$ if and only if $u$ and $v$ are conjugate elements in $G$. The {\it conjugacy growth function of $G$ with respect to $X$} is the function 
 $$\xi_{G,X}(n) := \left|\ball_X(n)/\sim_c\right|$$
that counts the number of $G$--conjugacy classes in the ball of radius $n$. Up to the asymptotic equivalence of functions (see Definition~\ref{def:asymequiv}),  $\gamma_{G,X}$ and $\xi_{G,X}$ are independent of the chosen generating set. We may therefore omit the generating set and just write $\gamma_G$ or $\xi_G$ where clear.

In \cite{GubaSapir}, Guba and Sapir conjectured, and verified in many instances, that finitely presented groups with exponential growth functions also have exponential conjugacy growth functions.
Similar to our discussion about the complexity of the conjugacy problem, a priori, even if $\CLF_G$ is linear we cannot extract information about $\xi_G$ from  $\CLF_G$ and $\gamma_G$.  On the other hand, if $\PCL_{G,X}$ is sublinear then we can conclude that $\xi_G$ is exponential whenever $\gamma_G$ is too. Moreover, we can give bounds on the growth rates. Recall that the {\it exponential growth rate} of a group $(G,X)$ is equal to
$$\grate_{G,X}=\limsup_{n\rightarrow \infty} \log \sqrt[n]{\gamma_{G,X}(n)},$$
 and the {\it exponential conjugacy growth rate} is equal to $$\cgrate_{G,X}=\limsup_{n\rightarrow \infty} \log \sqrt[n]{\xi_{G,X}(n)}.$$
 
\begin{prop}\label{lem:conj growth}
Let $X$ be a finite generating set of $G$.
Then 
\begin{equation}\label{eq:growth}
\dfrac{\gamma_{G,X}(n)}{{n\gamma_{G,X}\big(\PCL_{G,X}(2n)+n/2\big) }}  
\leq \xi_{G,X}(n)\leq \gamma_{G,X}(n).\end{equation}
In particular, if $\PCL_{G,X}(n)=o(n)$ then  $\dfrac{1}{2}\grate_{G,X} \leq \cgrate_{G,X}\leq \grate_{G,X}$.
\end{prop}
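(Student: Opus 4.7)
The upper bound $\xi_{G,X}(n)\le \gamma_{G,X}(n)$ is immediate from the surjectivity of the quotient map $\ball_X(n)\to\ball_X(n)/\sim_c$, so the content is entirely in the lower bound. The plan is to fix a conjugacy class $C$ meeting $\ball_X(n)$, choose once and for all a geodesic representative $u\in C$ with $\ell(u)\le n$, and bound $|C\cap\ball_X(n)|$ from above by showing that each of its elements is determined by a pair consisting of a cyclic permutation of $u$ and a group element of controlled length. Summing the identity $\gamma_{G,X}(n)=\sum_{C}|C\cap\ball_X(n)|$ across conjugacy classes $C$ meeting $\ball_X(n)$ and then dividing will yield the claimed inequality.

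For the counting step, I would take an arbitrary $v\in C\cap\ball_X(n)$, pick a geodesic word for $v$, and apply the definition of $\PCL$ to the pair $u,v$. Since $\ell(u)+\ell(v)\le 2n$, this supplies cyclic permutations $u'$ of $u$ and $v'$ of $v$ and a \pcconj{} $w$ with $|w|\le \PCL_{G,X}(2n)$ satisfying $wu'=v'w$, so $v'=wu'w^{-1}$. Writing $v\equiv pq$ with $v'\equiv qp$, we have $v=pv'p^{-1}=q^{-1}v'q$ as elements of $G$, so choosing whichever of $p$ or $q^{-1}$ is shorter produces an element $s$ with $|s|\le\lfloor\ell(v)/2\rfloor\le n/2$ and $v=(sw)u'(sw)^{-1}$. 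Hence each $v\in C\cap\ball_X(n)$ is determined by the pair $(u',sw)$, and there are at most $\ell(u)\cdot\gamma_{G,X}(\PCL_{G,X}(2n)+n/2)\le n\,\gamma_{G,X}(\PCL_{G,X}(2n)+n/2)$ such pairs, giving the required upper bound on $|C\cap\ball_X(n)|$.

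For the asymptotic statement, the plan is to invoke Fekete's lemma on the subadditive sequence $\log\gamma_{G,X}(n)$ (subadditivity follows from the submultiplicativity $\gamma_{G,X}(m+k)\le\gamma_{G,X}(m)\gamma_{G,X}(k)$), giving $\tfrac{1}{n}\log\gamma_{G,X}(n)\to\grate_{G,X}$. Under $\PCL_{G,X}(n)=o(n)$ the ratio $(\PCL_{G,X}(2n)+n/2)/n$ tends to $\tfrac12$, hence
$$\frac{1}{n}\log\gamma_{G,X}\big(\PCL_{G,X}(2n)+n/2\big)\longrightarrow\tfrac{1}{2}\grate_{G,X}.$$
Taking $\tfrac{1}{n}\log$ of the lower bound and using $\tfrac{1}{n}\log n\to 0$ then yields $\cgrate_{G,X}\ge\grate_{G,X}-\tfrac{1}{2}\grate_{G,X}=\tfrac{1}{2}\grate_{G,X}$, while $\cgrate_{G,X}\le\grate_{G,X}$ is immediate from $\xi\le\gamma$. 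The argument presents no serious obstacle; the one subtlety to observe is that exploiting the shorter of the two cyclic directions produces the factor $n/2$ rather than the looser $n$ inside $\gamma_{G,X}$, which is exactly what makes the resulting asymptotic lower bound $\tfrac12\grate_{G,X}$ useful.
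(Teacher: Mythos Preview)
Your proof is correct and follows essentially the same approach as the paper's: bound the size of each conjugacy class in $\ball_X(n)$ by observing that every element is conjugate to one of the at most $n$ cyclic permutations of a fixed representative $u$ by an element of length at most $\PCL_{G,X}(2n)+n/2$, then invoke Fekete's lemma for the asymptotic statement. The only cosmetic difference is that the paper splits $\gamma_{G,X}(\PCL_{G,X}(2n)+n/2)\le\gamma_{G,X}(\e n)\gamma_{G,X}(n/2)$ via submultiplicativity and an $\e$--argument, whereas you pass directly through $\frac{m_n}{n}\cdot\frac{1}{m_n}\log\gamma_{G,X}(m_n)$ with $m_n=\PCL_{G,X}(2n)+n/2$; both reach the same conclusion.
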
 \begin{proof}
The right-hand inequality of \eqref{eq:growth} is trivial. So we just need to prove the left-hand inequality.
For that, fix $u$ a geodesic word over $X$ of length less than or equal to $n$. We will show that $[u]_c(n)$, the intersection of the conjugacy class of $u$ with $\ball_X(n)$, has size
$$\left|[u]_c(n)\right|\leq {n\left|\ball_X(\PCL_{G,X}(2n)+n/2)\right|} .$$
Indeed, let $v\in [u]_c(n)$. Then there is a cyclic permutation $u'$ of $u$ and a cyclic permutation $v'$ of $v$ 
such that $u'$ and $v'$ are conjugated by an element of length $\PCL_{G,X}(2n)$. Note that $v$ is conjugated to $v'$ by 
an element of length $n/2$. Then every element of $[u]_c(n)$ is conjugated to a cyclic permutation of $u$ by an element of 
length at most $\PCL_{G,X}(2n)+n/2$. This gives the upper bound on the size of $[u]_c(n)$, which in turn proves the left-hand inequality of \eqref{eq:growth}.

Recall that $\gamma_{G,X}(n)$ is submultiplicative (i.e. $\gamma_{G,X}(p+q)\leq \gamma_{G,X}(p)\gamma_{G,X}(q)$). Hence by Fekete's Lemma (see
\cite[VI.56]{delaHarpe}) $\lim_{n\to \infty}\sqrt[n]{\left|\ball_X(n)\right|}$  exists. Let this limit be $\lambda\geq 1$.
Suppose that $\PCL_{G,X}(n)=o(n)$. Then for every $\e>0$ there exists $N>0$ such that $\PCL_{G,X}(2n)<\e n$ for all $n\geq N$.
From $\eqref{eq:growth}$, we have for $n\geq N$ that 
$$ \dfrac{\modulus{\ball_X(n)}}{n \left|\ball_X(\e n)\right| \left|\ball_X(n/2)\right|} {\leq \frac{|\modulus{\ball_X(n)}}{n\left|\ball_X(\PCL_{G,X}(2n)+n/2)\right|}} \leq  \xi_{G,X}(n).$$ Then we have that $$\lim_{n\to \infty}\sqrt[n]{\dfrac{\left|\ball_X(n)\right|}{n\left|\ball_X(\e n)\right|  \left|\ball_X(\frac{n}{2})\right|}} = \dfrac{\lambda}{\lambda^\e \lambda ^{1/2}}=\lambda^{\frac{1}{2}-\e}.$$
Hence for every $\e >0$, we have $\left(\frac{1}{2}-\e\right)\grate_{G,X}\leq\cgrate_{G,X}$.
\end{proof}

We remark that as a consequence of Proposition~\ref{lem:conj growth}, we get that $\frac{1}{2}\grate_{G,X} \leq \cgrate_{G,X}\leq \grate_{G,X}$ for any group $G$ that is hyperbolic relative to finite-by-abelian subgroups.

\subsection{Application to regular languages}

The language of conjugacy geodesics was introduced in \cite{CH} by Ciobanu and Hermiller, and it has been further studied in \cite{AC14,CHHR}. Fix $X$ to be a finite generating set of $G$, a  word $w$ is said to be a {\it conjugacy geodesic} if its
length is minimal among representatives of elements in its $G$--conjugacy class. The set of conjugacy geodesics is denoted by $\mathsf{ConjGeo}(G,X)$. When $\PCL$ is a constant function, then it might be possible to establish that $\mathsf{ConjGeo}(G,X)$ is a regular language. The following is implicit in the proof of \cite[Theorem 3.1]{CHHR} and explicit in \cite[Corollary 3.8]{CHHR}
\begin{prop}
Suppose that $\PCL_{G,X}(n)$ is constant.
\begin{enumerate} 
\item[{\rm (1)}] If there is a biautomatic structure on $\mathsf{Geo}(G,X)$, then $\mathsf{ConjGeo}(G,X)$ is regular.
\item[{\rm (2)}] If $(G,X)$ has the falsification by fellow traveler property, then $\mathsf{ConjGeo}(G,X)$ is regular.
\end{enumerate}
\end{prop}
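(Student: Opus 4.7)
The plan is to use the constant bound $\PCL_{G,X}(n)\le K$ to give a finite-state characterization of conjugacy geodesics. The starting observation is: a geodesic word $w$ over $X$ fails to be a conjugacy geodesic if and only if there exist a cyclic permutation $w'$ of $w$ and an element $g\in G$ with $\ell(g)\le K$ such that $\ell(g^{-1}w'g)<\ell(w)$. The ``only if'' direction is immediate; for ``if'', if $v$ is any geodesic word conjugate to $w$ with $\ell(v)<\ell(w)$, then $\ell(w)+\ell(v)\le 2\ell(w)$, so the PCL hypothesis produces cyclic permutations $w',v'$ and a \pcconj{} $g$ with $\ell(g)\le K$ and $gw'=v'g$, whence the element $g^{-1}w'g=v'$ has length at most $\ell(v)<\ell(w)$.

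Enumerate the finitely many elements $g_1,\ldots,g_N$ of $G$ with $\ell(g_i)\le K$ and, for each $i$, set
\[
L_i:=\{w\in \mathsf{Geo}(G,X):\ell(g_i^{-1}wg_i)<\ell(w)\}.
\]
By the characterization above, a geodesic $w$ fails to be a conjugacy geodesic exactly when some cyclic permutation of $w$ lies in some $L_i$. The plan is then to verify three closure properties of regular languages: (a) each $L_i$ is regular; (b) the cyclic closure $\mathrm{cyc}(L_i):=\{uv:vu\in L_i\}$ of a regular language is regular, which is a classical closure property; and (c) $\mathsf{Geo}(G,X)$ is regular under either hypothesis of the proposition. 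Granting (a)--(c), we conclude that
\[
\mathsf{ConjGeo}(G,X)\;=\;\mathsf{Geo}(G,X)\;\setminus\;\bigcup_{i=1}^{N}\mathrm{cyc}(L_i)
\]
is regular, as required.

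The main obstacle is step (a), whose execution differs between the two settings. Under biautomaticity of $\mathsf{Geo}(G,X)$, the left and right multiplier automata for $g_i$ and $g_i^{-1}$ provide synchronous control over a geodesic form of $g_i^{-1}wg_i$, and a length comparison with offset uniformly bounded by $2K$ is implemented by a finite counter, following the constructions in \cite{CH} and \cite{CHHR}. Under the falsification by fellow-traveler property, the same references show that one can recognize in finite state which words can be shortened to a strictly shorter $k$-fellow-traveler; this is precisely the witness needed to certify $\ell(g_i^{-1}wg_i)<\ell(w)$ for the fixed element $g_i$. In either setting the decisive point is that the constant bound on $\PCL$ makes the outer index set $\{g_1,\ldots,g_N\}$ finite, so that the union in the display above, and hence the whole construction, remains finite-state.
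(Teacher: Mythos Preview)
The paper itself does not give a proof of this proposition; it simply records that the statement is implicit in the proof of \cite[Theorem~3.1]{CHHR} and explicit in \cite[Corollary~3.8]{CHHR}. Your outline is indeed the argument one finds there, and the overall strategy---reduce to a finite list of short conjugators, pass to cyclic closures, and subtract from a regular geodesic language---is correct.

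There is, however, a genuine gap in your reduction. You define $L_i\subseteq\mathsf{Geo}(G,X)$ and then assert that a geodesic $w$ fails to be a conjugacy geodesic exactly when some cyclic permutation of $w$ lies in some $L_i$. But the cyclic permutation $w'$ produced by the $\PCL$ hypothesis is only a cyclic permutation of the \emph{word} $w$; nothing guarantees that $w'$ is itself a geodesic word, so $w'$ need not lie in $L_i$ as you have defined it. (Incidentally, from $gw'=v'g$ one gets $gw'g^{-1}=v'$, not $g^{-1}w'g=v'$; and your ``if'' and ``only if'' are interchanged---these are cosmetic.) The clean fix is to replace $\mathsf{Geo}(G,X)$ by $\mathsf{CycGeo}(G,X)$ in your final displayed identity: every conjugacy geodesic is automatically a cyclic geodesic, and once $w\in\mathsf{CycGeo}(G,X)$ all of its cyclic permutations are geodesic, so the $w'$ produced by $\PCL$ genuinely lies in the appropriate $L_i$. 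Since $\mathsf{CycGeo}(G,X)=X^*\setminus\operatorname{cyc}\bigl(X^*\setminus\mathsf{Geo}(G,X)\bigr)$ is regular whenever $\mathsf{Geo}(G,X)$ is, the rest of your argument then goes through. Alternatively, one can observe directly that if $w$ is geodesic but some cyclic permutation is not, then the \emph{last} geodesic cyclic shift $w''$ (before the first non-geodesic one) satisfies $\lvert x^{-1}w''x\rvert_X<\ell(w'')$ for a single generator $x\in X$, so $w''\in L_x$; this also closes the gap without changing your formula, since $K\ge 1$.
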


\subsection{Behaviour under a change of generating set}\label{sec:change gen set}

We first note that changing the generating set will leave $\PCL$ invariant whenever $\CLF_G$ is super-linear. This follows from observation \eqref{eq: PCL and CLF} in Section \ref{sec:intro} and the invariance of $\CLF$ under changing generating sets. 

Let $X$ and $Y$ be two finite generating sets for a group $G$. Consider an element $g\in G$ which is represented by a word $u$ on $X$ and $v$ on $Y$. The set of elements in $G$ given by the cyclic permutations of $u$ may be very different to those from cyclic permutations of $v$. Therefore, in general, it is not clear how the permutation conjugacy length function will behave when changing generating set.

To relate words in $X$ to words in $Y$, we rewrite each element of $X$ as a word on $Y$, and use this to rewrite $u$ to give a word $u_0$ on $Y$ representing $g$. If $u$ was a geodesic word, then $u_0$ will be a quasi-geodesic word, or more generally if $u$ was a quasi-geodesic word then $u_0$ will also be quasi-geodesic, but with larger constants.

Under this process however, the set of elements represented by cyclic permutations of $u$ is contained in the set of elements represented by cyclic permutations of $u_0$. This motivates the following definition.

\begin{defn}
	The permutation conjugacy length function for $(\lambda,c)$--quasi-geodesic words, with respect to the generating set $X$, is the function
	$\qPCL:\N\to\N$
	which takes $n\in \N$ to
	$$\max\big\{\PCL(u,v) \,\mid \, u,v \text{ are $(\lambda,c)$--quasi-geodesics satisfying }\ell(u)+\ell(v)\leq n\big\}.$$
\end{defn}

If we understand the permutation conjugacy length function for quasi-geodesics, then we may change the generating set and maintain control over the length of \pcconj s, as described in the following:

\begin{lem}
	Suppose $X$ and $Y$ are finite generating sets for $G$. For each $\lambda\geq 1$, $c\geq 0$ there exists $\lambda' \geq \lambda$, $c'\geq c$ and $K>0$ such that
	$$\qPCL(n) \leq  K^2 + K\qPCLp(Kn),\ \ \textrm{ for all $n \in \N$}.$$
\end{lem}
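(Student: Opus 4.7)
The plan is to rewrite $u$ and $v$ in the generating set $Y$, apply the bound from $\qPCLp$, then translate the resulting conjugator back to $X$. The subtle point is that a cyclic permutation of the $Y$-rewriting of $u$ need not correspond to any cyclic permutation of $u$ itself; this mismatch will be absorbed into the conjugator. First I would fix a constant $K$ large enough that every $x\in X$ is represented by a word in $Y$ of length $\leq K$, every $y\in Y$ by a word in $X$ of length $\leq K$, and each $y^{-1}$ by a word in $Y$ of length $\leq K$. A standard bilipschitz argument then shows that rewriting a $(\lambda,c)$-quasi-geodesic $u$ on $X$ to a word $u_0$ on $Y$ (substituting the chosen $Y$-word for each letter of $X$) yields a $(\lambda',c')$-quasi-geodesic on $Y$, for some $\lambda'\geq \lambda$, $c'\geq c$ depending only on $\lambda,c,K$, and satisfying $\ell_Y(u_0)\leq K\,\ell_X(u)$.

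Given $(\lambda,c)$-quasi-geodesics $u,v$ on $X$ with $\ell(u)+\ell(v)\leq n$, form $u_0,v_0$ on $Y$ of combined $Y$-length $\leq Kn$; the definition of $\qPCLp$ supplies cyclic permutations $\widetilde u_0,\widetilde v_0$ of $u_0,v_0$ and a \pcconj\ $w_0$ with $\ell_Y(w_0)\leq \qPCLp(Kn)$ and $w_0\widetilde u_0=\widetilde v_0 w_0$. The main step is the boundary alignment. If the cut producing $\widetilde u_0$ lies inside the $Y$-rewriting of the $j$-th letter $x_j$ of $u$, split that $Y$-word as $\beta\alpha$ with $\alpha$ the suffix following the cut. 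Let $u'=x_j x_{j+1}\cdots x_{j-1}$ be the cyclic permutation of $u$ starting at $x_j$, and let $u'_0$ be its $Y$-rewriting. A direct word-level comparison shows that in $G$, $\widetilde u_0 = \alpha u'_0 \alpha^{-1}$. An analogous identity $\widetilde v_0 = \gamma v'_0 \gamma^{-1}$ holds on the $v$-side, with $\gamma$ arising the same way from $v$. Substituting into $w_0\widetilde u_0=\widetilde v_0 w_0$ shows that $w:=\gamma^{-1} w_0 \alpha$ satisfies $w u'_0 = v'_0 w$, hence is a \pcconj\ for $u$ and $v$ on $X$.

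The length estimate is routine: $\alpha$ and $\gamma$ are proper subwords of letter-rewritings, so they have $Y$-length $\leq K$ and their inverses have $Y$-length $\leq K^2$. Therefore $\ell_Y(w)\leq 2K^2 + \qPCLp(Kn)$, and converting back to the $X$-metric gives $\ell_X(w)\leq 2K^3 + K\,\qPCLp(Kn)$. Replacing $K$ by a sufficiently larger constant absorbs the additive term and yields the stated bound $K^2 + K\,\qPCLp(Kn)$. I expect the main obstacle to be the bookkeeping in the boundary-alignment identity: one has to keep straight which cuts fall at letter-boundaries of the original $X$-word (requiring no correction) and which fall strictly inside a letter-rewriting (producing the conjugation by $\alpha$ or $\gamma$), and to verify carefully that the rewriting operation preserves the quasi-geodesic property with the claimed constants. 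Once that is pinned down, the rest is mechanical.
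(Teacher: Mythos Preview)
Your approach matches the paper's, which in fact leaves the key inequality $\PCL_{G,X}(u,v)\le K^{2}+K\,\PCL_{G,Y}(\hat u,\hat v)$ ``as an exercise to the reader''; you have supplied exactly those details. Two minor remarks: your alignment identity should read $\widetilde u_0=\alpha\,u''_0\,\alpha^{-1}$ with $u''$ the cyclic permutation of $u$ starting at $x_{j+1}$ (not $x_j$), and if on the $v$-side you use the \emph{prefix} $\delta$ of the broken block (so that $\widetilde v_0=\delta^{-1}v'''\delta$) you obtain $w=\delta\,w_0\,\alpha$ with no inverses appearing, giving the additive term $2K^{2}$ directly and avoiding your detour through bounding $\ell_Y(\gamma^{-1})$.
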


\begin{proof}
	For each $x \in X$, let $w_x$ be a word on $Y$ representing the same element in $G$ as $x$. Similarly for $y \in Y$ let $w_y$ be a word on $X$ representing the same element as $y$. We define
	$$K:=\max\{\ell_X(w_y),\ell_Y(w_x) \mid x\in X, y\in Y\}.$$
	
	Let $u,v$ be words on $X$ that are $(\lambda,c)$--quasi-geodesics representing conjugate elements in $G$. Convert them into words on $Y$ by exchanging each generator with the corresponding word $w_y$ to obtain words $\hat{u},\hat{v}$ on $Y$ representing the same elements. Then there will be $\lambda'\geq \lambda$ and $c'\geq c$ such that $\hat{u},\hat{v}$ are $(\lambda',c')$--quasi-geodesic words.
	
	We leave as an exercise to the reader to prove
	$$\PCL_{G,X}(u,v) \leq K^2+K\PCL_{G,Y}(\hat{u},\hat{v}).$$
	The Lemma follows immediately.
\end{proof}

We ask for groups where the (geodesic) permutation conjugacy length function does depend on the generating set. In particular:
	\begin{question} 
		Can one find a group $G$ with finite generating sets $X$ and $Y$ such that $\PCL_{G,X}$ is constant but $\PCL_{G,Y}$ is unbounded.
	\end{question}

\section{Hyperbolic Cayley graphs}\label{sec:hyp cayley graphs}\label{sec:hyp cayley graph}

In this section we consider the case where $G$ has some generating set $S$, which may be infinite, with respect to which the Cayley graph is $\delta$--hyperbolic.

Suppose $u,v,w$ are geodesic words on $S$ such that $wu=vw$. From this we can construct a geodesic quadrilateral in the Cayley graph of $G$. If $w$ is instead a \pcconj{} for $u,v$, then the polygon we want to construct will be a hexagon. 
Indeed, we consider the hexagon $\cQ $ in the Cayley graph $\Gamma = \ga (G,S)$, as in Figure \ref{fig:hexagon}, which has geodesic sides. It will have two opposite sides labelled by $w$. If $wu'=v'w$, then the six vertices of $\cQ $ will be at $1,w,wu_2,$ $wu',v',v_2$, where $u\equiv u_1u_2$, $u'\equiv u_2u_1$ and $v\equiv v_1v_2$, $v'\equiv v_2v_1$.

	\begin{figure}[h!]
		\labellist
		\small \hair 2pt
		\pinlabel ${u_2}$ at 40 510
		\pinlabel ${u_1}$ at 210 510
		\pinlabel ${v_2}$ at 55 25
		\pinlabel ${v_1}$ at 230 68
		\pinlabel $w$ at -7 285
		\pinlabel $w$ at 248 285
		\pinlabel $1$ at -15 165
		\pinlabel $wu'=v'w$ at 320 410
		\pinlabel $v'$ at 267 166
		\endlabellist
		\includegraphics[height=7cm]{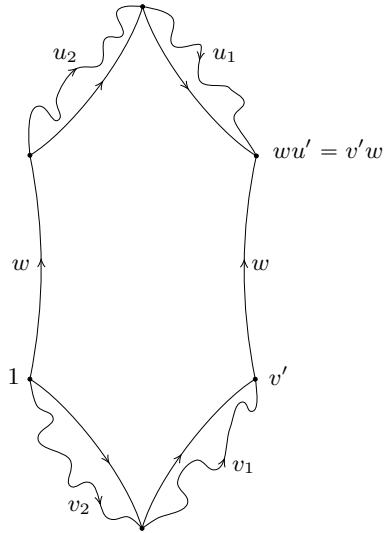}
		\caption{The quasi-geodesic hexagon $\cQ $ recognising $w$ as a \pcconj{} of $u\equiv u_1u_2$ and $v\equiv v_1v_2$. When the Cayley graph is $\delta$--hyperbolic the quasi-geodesic edges will be within a $B$--neighbourhood of a geodesic, for a constant $B=B(\delta,\lambda,c)$.}\label{fig:hexagon}
	\end{figure}
	
In general we will take $u,v$ to be quasi-geodesics, meaning the four sides of $\cQ $ labelled by subwords of $u$ or $v$ will now be quasi-geodesics rather than geodesics.

The following result asserts that in a hyperbolic Cayley graph, the hexagon $\cQ $ will be skinny in a very precise sense.

\begin{lem}\label{lem:quasi-pcl on hyperbolic Cayley graph}
	Suppose a group $G$ has a (possibly infinite) generating set $S$ such that the Cayley graph $\ga (G,S)$ is $\delta$--hyperbolic for some $\delta$.
	Let $\lambda\geq 1,c\geq 0$ and suppose $u,v$ are  $(\lambda,c)$--quasi-geodesic words on $S$ representing conjugate elements of $G$. 
	
	Then there exists a constant $B=B(\delta,\lambda,c)$ such that if $w$ is a geodesic word on $S$ representing a \pcconj{} for $u,v$ of minimal length, and if $w_i$ is the length $i$ prefix of $w$, then for $4\delta+B < i < \abs{w}_S-4\delta-B$,
			$$d_S(w_i,v'w_i) \leq 8 \delta$$
	where $v'$ is the cyclic permutation of $v$ such that $wu'=v'w$ for some cyclic permutation $u'$ of $u$.
\end{lem}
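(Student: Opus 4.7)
The plan is to exploit $\delta$-hyperbolicity via thin polygons, tame the quasi-geodesic sides using the Morse stability lemma, and extract a tight synchronisation bound from the minimality of $w$. The Morse lemma furnishes a constant $B=B(\delta,\lambda,c)$ such that each of the four quasi-geodesic sides of $\cQ$ (labelled $u_2,u_1,v_1,v_2$) lies within Hausdorff distance $B$ of the geodesic with the same endpoints. Replacing the quasi-geodesic sides by these geodesics yields a geodesic hexagon $\wh{\cQ}$ on the same six vertices, and since every geodesic $n$-gon in a $\delta$-hyperbolic space is $(n-2)\delta$-thin, $\wh{\cQ}$ is $4\delta$-thin.

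The heart of the argument is to use the minimality of $w$ to rule out that $w_i$ is $4\delta$-close to any of the four ``short'' sides of $\wh{\cQ}$, forcing it to be close to the opposite geodesic $[v',v'w]=[v',wu']$ (which also carries the label $w$). Write $u_2^{(k)}$ for the length-$k$ prefix of $u_2$, etc. Suppose $w_i$ is within $4\delta$ of the geodesic replacing $u_2$; then by Morse it is within $4\delta+B$ of some point $q=wu_2^{(k)}$ on the quasi-geodesic $u_2$. Writing $u_2=u_2^{(k)}u_2^{*}$ and using $wu'=v'w$, one checks that $q$ is itself a \pcconj{}: $q\cdot (u_2^{*}u_1u_2^{(k)})=v'\cdot q$, and $u_2^{*}u_1u_2^{(k)}$ is a cyclic permutation of $u$. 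Minimality then gives $|w|_S\leq |q|_S\leq i+4\delta+B$, contradicting $i<|w|_S-4\delta-B$; the case of $u_1$ is analogous. For the bottom, if $w_i$ were within $4\delta$ of the geodesic replacing $v_2$, say close to $q=v_2^{(k)}$ on $v_2$ (with $v_2=v_2^{(k)}v_2^{*}$), then $\hat w=q^{-1}w$ is a \pcconj{} for $u'$ and the cyclic permutation $v_2^{*}v_1v_2^{(k)}$ of $v$; its length is at most $d_S(q,w_i)+d_S(w_i,w)\leq (4\delta+B)+(|w|_S-i)<|w|_S$, again contradicting minimality, and the $v_1$ side is analogous. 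One concludes that the nearest point to $w_i$ lies on the opposite geodesic; write it as $v'w_{j(i)}$, so $d_S(w_i,v'w_{j(i)})\leq 4\delta$.

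The final step, which I expect to be the most delicate, is the synchronisation $|i-j(i)|\leq 4\delta$, for then the triangle inequality gives $d_S(w_i,v'w_i)\leq d_S(w_i,v'w_{j(i)})+|i-j(i)|\leq 8\delta$. The trick is to apply minimality to two further \pcconj s built trivially from $w$. First, $v'^{-1}w$ is itself a \pcconj{} for the pair $(u',v')$, since $(v'^{-1}w)u'=v'^{-1}(wu')=w=v'(v'^{-1}w)$; hence $|v'^{-1}w|_S=d_S(v',w)\geq |w|_S$. The bridged path $v'\to v'w_{j(i)}\to w_i\to w$ has length $j(i)+4\delta+(|w|_S-i)$, so $|w|_S\leq j(i)+4\delta+(|w|_S-i)$, i.e.\ $j(i)\geq i-4\delta$. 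Symmetrically, $v'w=wu'$ is a \pcconj{} since $(wu')u'=v'(wu')$, so $d_S(1,v'w)\geq |w|_S$; the path $1\to w_i\to v'w_{j(i)}\to v'w$ has length $i+4\delta+(|w|_S-j(i))$, giving $j(i)\leq i+4\delta$. Combining the two bounds completes the proof.
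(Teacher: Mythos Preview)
Your proof is correct and follows essentially the same route as the paper: thinness of the straightened hexagon plus shortcut/minimality arguments force $w_i$ to be $4\delta$-close to some point $v'w_{j(i)}$ on the opposite $w$-side, and a second minimality argument gives the synchronisation $|i-j(i)|\le 4\delta$. Your phrasing of the synchronisation via the auxiliary PC-conjugators $v'^{-1}w$ and $v'w=wu'$ is precisely the algebraic content of the paper's pictorial shortcut (the shorter conjugacy hexagon it draws when $|i-j|>4\delta$ realises one of these two elements as a strictly shorter PC-conjugator).
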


\begin{proof}
	Let $u$ and $v$ be the quasi-geodesic words and $w\equiv s_1s_2\ldots s_n$ a word on $S$ of minimal length such that $wu'=v'w$ for some cyclic permutations $u',v'$ of $u,v$ respectively. Let $\cQ $ be the hexagon in Figure \ref{fig:hexagon}, defined above.
	
	We use the property that a point on a side  of a geodesic hexagon in a hyperbolic space lies within distance $4\delta$ from a point on one of the other sides. Since four of the sides of our hexagon $\cQ $ are $(\lambda,c)$--quasi-geodesics, we need to add a constant $B=B(\delta,\lambda,c)$ for each of these sides involved. Here $B$ is chosen so it satisfies the property that any $(\lambda,c)$--quasi-geodesic lies in the $B$--neighbourhood of some geodesic connecting its endpoints.

	Let $w_i \equiv s_1\ldots s_i$ be the length $i$ prefix of $w$ and consider the vertex on the left edge of the hexagon corresponding to the group element  $w_i$. Suppose $4\delta+B < i < \abs{w}_S-4\delta-B$. Then there is some point on one of the other edges which is close to $w_i$. If it is on the opposite side, then it will be within a distance of $4\delta$ (since both sides of $\cQ $ involved are geodesics). If it is on any of the other sides, then it will be within distance $4\delta+B$, since that side is a quasi-geodesic.
	
	The key point to observe here is that given the assumption on $i$, if $w_i$ is within distance $4\delta+B$ of some vertex in a side labelled by a subword of $u$ or $v$, then we are able to take a short-cut and obtain a \pcconj{} with shorter $S$--length. See Figure \ref{fig:hexagon2} (a). 

	\begin{figure}[h!]
		\labellist
			\small \hair 2pt
			\pinlabel $\textrm{(a)}$ at 133 590
			\pinlabel $u_2$ at 72 486
			\pinlabel $u_1$ at 192 486
			\pinlabel $w_i$ at 50 270
			\pinlabel $v'w_i$ at 205 270
			\pinlabel $>4\delta+B$ at -50 225
			\pinlabel $<4\delta+B$ at 105 210
			\small
			\pinlabel $v_2$ at 78 68
			\pinlabel $v_1$ at 188 68
			\pinlabel $\textrm{(b)}$ at 620 590
			\pinlabel $u_2$ at 438 486
			\pinlabel $u_1$ at 561 486
			\pinlabel $w_i$ at 375 270
			\pinlabel $v'w_i$ at 580 280
			\pinlabel $x$ at 630 350
			\pinlabel $v'x$ at 880 350
			\pinlabel $<4\delta$ at 500 328
			\pinlabel $>4\delta$ at 647 310
			\pinlabel $v_2$ at 442 68
			\pinlabel $v_1$ at 556 68
		\endlabellist
			\vspace{0.5cm}
		\includegraphics[height=8cm]{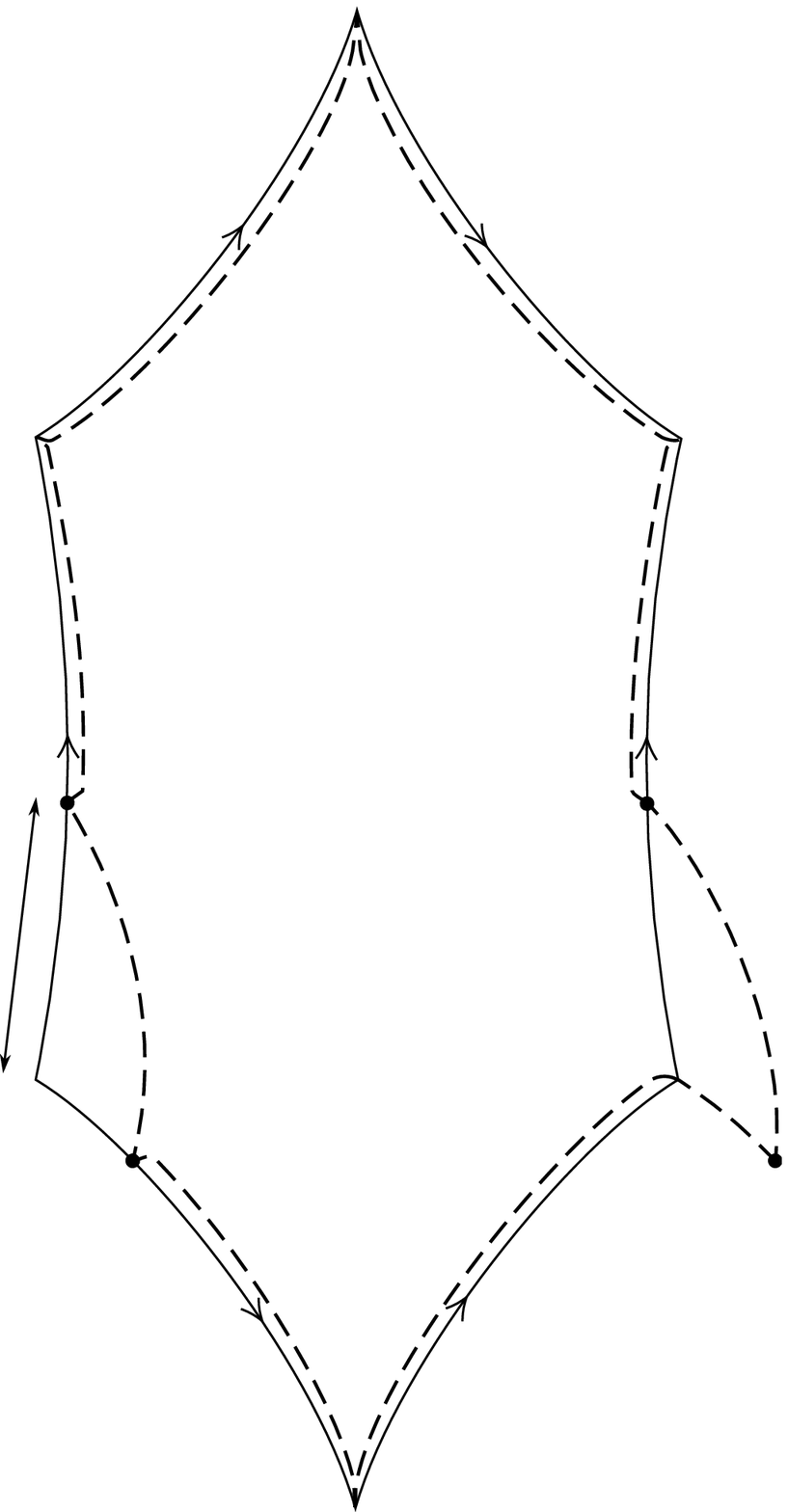} \hspace{1cm} \includegraphics[height=8cm]{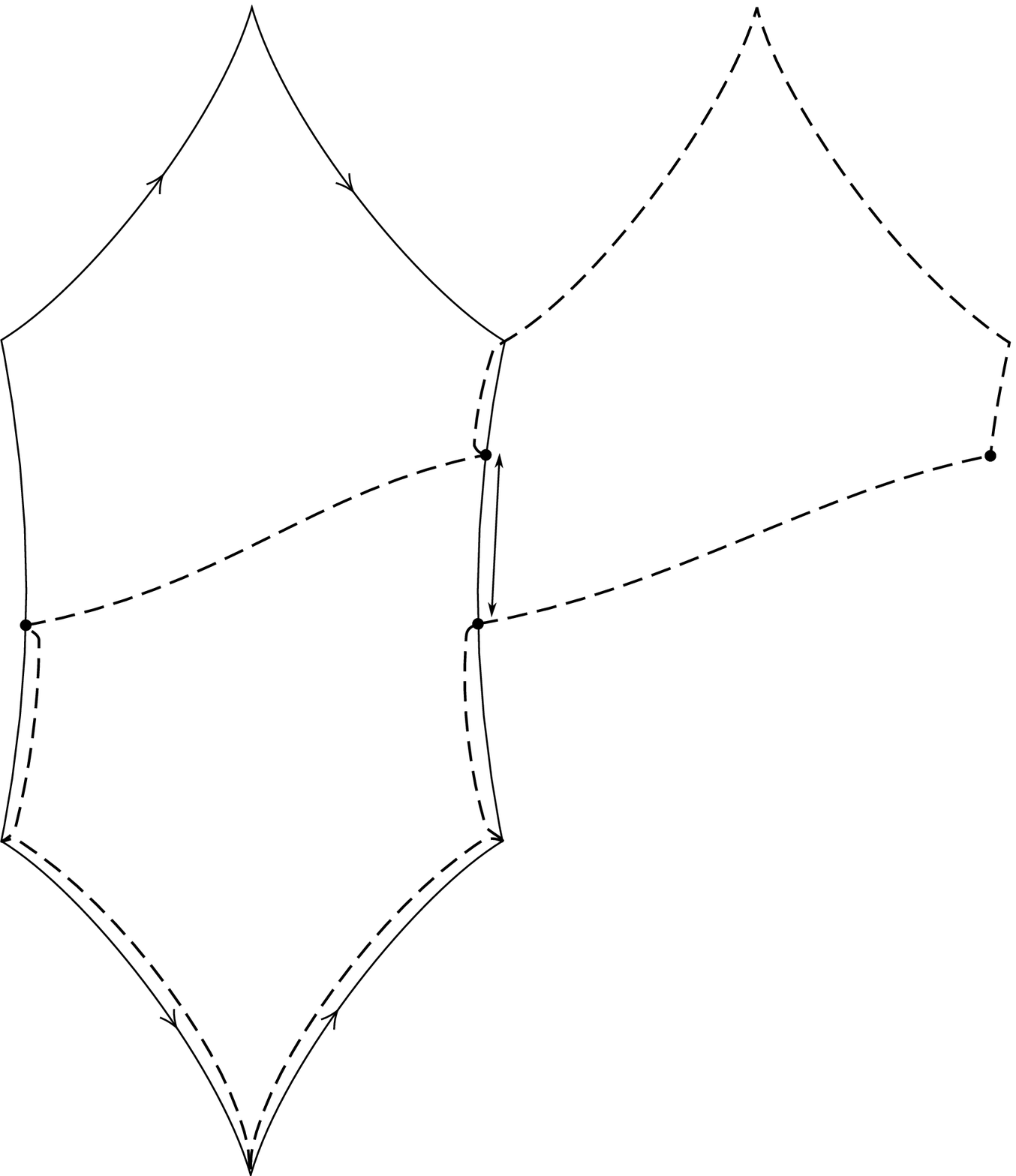} 
		\caption{Short-cuts giving a word $w'$ on $S$ which is a \pcconj{} for $u=u_1u_2,v=v_1v_2$ but $\abs{w'}_S<\abs{w}_S$. The dotted lines outline the hexagons representing the new conjugacy diagrams.}\label{fig:hexagon2}
	\end{figure}

	Hence the point close to $w_i$ on another side of $\cQ$ must in fact lie on the opposite side, i.{}e.{} the other side labelled by $w$, and hence be within $4\delta$. Let $x$ be this point. If this point is further than $4\delta$ from the vertex $v'w_i$ then we will be able to take a short-cut across the hexagon from $x$ to $w_i$, see Figure \ref{fig:hexagon2} (b), enabling us to find a \pcconj{} of shorter $S$--length. Hence
			$$d_S(w_i,v'w_i) \leq d_S(w_i,x)+d_S(x,v'w_i) < 8\delta,$$
	proving the Lemma.
\end{proof}

We now deal with the special case where the group is hyperbolic, showing that $\PCL_{G,X}(n)$ is bounded by a constant.

\begin{proof}[Proof of Proposition \ref{propspecial:pcl hyp}]
	We use Lemma \ref{lem:quasi-pcl on hyperbolic Cayley graph} with $S=X$. We further assume that $u,v$ are geodesic words so, by the definition of $B$ in the proof of Lemma \ref{lem:quasi-pcl on hyperbolic Cayley graph}, we have $B=0$.
	Let $w,w_i,u',v'$ be as in the proof of Lemma \ref{lem:quasi-pcl on hyperbolic Cayley graph}.
	
	We can consider the elements represented by $w_i^{-1}v'w_i$ for $4\delta<i<\modulus{W}_X-4\delta$ and observe that if we have repetition among these elements then we can cut a middle section from the diagram and create a new, shorter conjugator. Indeed, if $i<j$ and $w_i^{-1}v'w_i=w_j^{-1}v'w_j$, then if we set $w_0 = w_iw_j^{-1}w$ one can verify that $w_0u'=v'w_0$. However $\modulus{w_0}_X=\modulus{w}_X-(j-i)$. Hence if $w$ is of minimal length, then we can have no repetition, giving
		$$\abs{w}_S \leq \modulus{\ball_X(8\delta)} + 8\delta$$
and completing the proof.
\end{proof}

As we mentioned, $\PCL$ is inspired by \cite[Chapter III.$\Gamma$ Lemma 2.11]{BH}
which says that if $\ga(G,S)$ is hyperbolic  (without assumptions on finiteness of $S$)
then there is a $K>0$ such that for any pair of cyclic geodesics $u,v$,
if $\max\{\ell(u),\ell(v)\}>K$ then $\PCL_{G,X}(u,v)\leq K$.

We want to point out two differences.   Since \cite[Chapter III.$\Gamma$ Lemma 2.11]{BH} deals with geodesic quadrilaterals,
once one finds a \pcconj{} of minimal length, the geodesics labelled by the cyclic permutations of $u$ and $v$ synchronously $K_1$-fellow travel, where $K_1$ is a constant
depending on $\delta$ and $K$. In particular, one can see that it is only necessary to cyclically permute one of the cyclic geodesic words to get
a conjugator of length less than $K_1$. This is not the case when one consider geodesics. This is a key observation that
allows one to solve the conjugacy problem in hyperbolic groups in linear time, compared to Proposition \ref{prop:complexity}
that gives a quadratic bound.

The second difference arises in the bound of the length of the conjugator. In \cite[Chapter III.$\Gamma$ Lemma 2.11]{BH}
it is not necessary to have a finite generating set, let us explain why. Using the assumption that $u$ and $v$ are long enough cyclic geodesics,
one can show that no pair of vertices in different  $w$--sides in Figure \ref{fig:hexagon} are at distance less that $8\delta$,
in this case Lemma \ref{lem:quasi-pcl on hyperbolic Cayley graph} gives that the conjugator has to have $S$--length at most
$8\delta +2B$. If one considers general geodesics, either some finiteness condition (like in Proposition \ref{propspecial:pcl hyp}) or a lower bound on translation length
becomes crucial in order to bound the conjugator.

\section{Relatively hyperbolic groups}\label{sec:rel hyp}

In this section we collect some preliminary material concerning relatively  hyperbolic groups.

Let $G$ be a group, $X$ a finite generating set, and $\{H_\omega\}_{\omega\in\Omega}$ a collection of subgroups. Let $\Gamma = \Gamma(G,S)$ be the Cayley graph of $G$ with respect to the generating set $S$, where
			$$S=X \cup \bigcup_{\omega \in \Omega} H_\omega \setminus \{1\}.$$
Given an edge path $p$ in $\ga$ we denote by $p_{-}$ and $p_+$ the initial and terminal vertices of $p$.

\begin{defn}
	Let $p$ be a path in $\ga$.
	\begin{enumerate}
		\item An {\it $H_\omega$--component} of ${p}$ is a subpath ${s}$ such that all the edges of $s$ have labels from $H_\omega$  and $s$ is not contained in any other subpath of $p$ with this property. A subpath ${s}$ is a {\it component} if it is an $H_\omega$--component for some $\omega\in \Omega$. For a component $s$ of $p$ and a generating set $Y$ of $G$, the  {\it $Y$--length of $s$} is  $\d_Y(s_-,s_+)$.
		
		\item Two components ${s}$ and ${r}$ (not necessarily in the same path) are {\it connected} if both are $H_\omega$--components for
		some $\omega\in \Omega$ and all the vertices of $s$ and all the vertices of $r$ lie in the same coset of $H_\omega$.  
		\item A component ${s}$ of ${p}$ is {\it isolated} if it is not connected to any other component ${r}$ of ${p}$.  
	\end{enumerate}
\end{defn}
	
		\begin{defn}
			A group $G$ is \emph{hyperbolic relative to a collection of subgroups} $\{H_\omega\}_{\omega \in \Omega}$ if:
	\begin{enumerate}
		\item the Cayley graph $\Gamma(G,S)$ is $\delta$--hyperbolic for some $\delta\geq 0$, and
		\item the pair $(G,\{H_\omega\}_{\omega\in \Omega})$ satisfies the \emph{bounded coset penetration property}:
		for any $\lambda\geq 1$ there exists $a=a(\lambda)>0$ such that for all $(\lambda,0)$--quasi-geodesics $p,q$ in $\Gamma(G,S)$ with $p_-=q_-$ and $d_X(p_+,q_+)\leq 1$, then
		\begin{enumerate}
			\item if a subpath $s$ of $p$ is an $H_\omega$-component and $d_X(s_-,s_+)\geq a$, then $s$ is connected to an $H_\omega$--component of $q$,
			\item if $s,t$ are connected components of $p,q$ respectively, then $d_X(s_-,t_-)\leq a$ and $d_X(s_+,t_+)\leq a$.
		\end{enumerate}
	\end{enumerate}
	\end{defn}
	By \cite[Lemma 6.9, Theorem 6.10]{Osin06} this is equivalent to the versions of relative hyperbolicity given by Bowditch \cite{Bowditch}, Farb \cite{Farb}, and Osin \cite{Osin06}.

In the following three lemmas, $G$ denotes a group with finite generating set $X$, and which is hyperbolic relative to a collection of subgroups $\{H_\omega\}_{\omega\in \Omega}$.
We first need the following result, which is a version of \cite[Proposition 3.2]{OsinPF}.
\begin{lem}\label{lem:n-gon}	
	There exists $D=D(G,X,\lambda,c)>0$ such that the following holds. Let 
	$\mathcal{P}=p_1p_2\cdots p_n$ be an $n$--gon in $\ga(G,X\cup \cH)$ and  $I$ a distinguished subset  of sides of $\mathcal{P}$ such that if $p_i\in I$, $p_i$ is an isolated component in $\mathcal{P}$, and
	if $p_i\notin I$, $p_i$ is a $(\lambda,c)$--quasi-geodesic. Then
	$$\sum_{i\in I}\d_X((p_i)_-,(p_i)_+)\leq D n.$$
\end{lem}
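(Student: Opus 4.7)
The plan is to reduce this quasi-geodesic statement to Osin's geodesic version \cite[Proposition 3.2]{OsinPF}, which treats the case where the sides $p_j \notin I$ are geodesics in $\ga(G,X\cup\cH)$. The reduction is done by replacing each quasi-geodesic side with a geodesic sharing its endpoints, and then checking that the isolation of the distinguished components survives this replacement up to a bounded error.

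First, for each $j \notin I$, I would pick a geodesic $q_j$ in $\ga(G,X\cup\cH)$ with the same endpoints as $p_j$. By the stability of quasi-geodesics in a $\delta$--hyperbolic space, there is a constant $B=B(\delta,\lambda,c)$ such that $p_j$ and $q_j$ lie within Hausdorff distance $B$. Let $\mathcal{P}'$ denote the resulting $n$--gon obtained by keeping the sides $p_i$ ($i\in I$) and replacing each $p_j$ ($j\notin I$) by $q_j$. Every side of $\mathcal{P}'$ is either a single distinguished edge from $I$ or a geodesic in $\ga$.

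Next, I would verify that each $p_i\in I$ remains an isolated $H_\omega$--component of $\mathcal{P}'$, modulo bounded corrections. Suppose some $p_i\in I$ were connected, in $\mathcal{P}'$, to an $H_\omega$--component $s$ of some geodesic side $q_j$. Let $a=a(\lambda)$ be the BCP constant. If $\dx(s_-,s_+)\ge a$, apply BCP to the pair $(p_j,q_j)$, which share endpoints: the component $s$ of $q_j$ must be connected to an $H_\omega$--component $s'$ of $p_j$. Since connectedness of $H_\omega$--components is transitive (they all lie in one coset of $H_\omega$), $p_i$ would be connected to $s'$, contradicting the isolation of $p_i$ in $\mathcal{P}$. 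Hence the only way $p_i$ can fail to be isolated in $\mathcal{P}'$ is through short $H_\omega$--components of the $q_j$ with $\dx<a$, and these contribute at most $O(n)$ in total $X$--length.

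Finally, I would apply Osin's original geodesic statement to $\mathcal{P}'$, obtaining a constant $D_0=D_0(G,X)$ with $\sum_{i\in I}\dx((p_i)_-,(p_i)_+)\le D_0 n$, and absorb the short-component correction into a larger constant $D=D(G,X,\lambda,c)$.

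\textbf{Main obstacle.} The delicate step is the BCP argument verifying that $p_i\in I$ remains (essentially) isolated in $\mathcal{P}'$. New short $H_\omega$--components can appear in $q_j$ without corresponding to any component of $p_j$, so the transitivity-of-connectedness trick only applies to long new components. One therefore needs a separate counting argument bounding how many short components of each $q_j$ can lie in the same coset as some $p_i\in I$, and to absorb this contribution into the constant $D$. An alternative, if cleaner, would be to reprove Osin's proposition directly, replacing the $\delta$--thinness of geodesic $n$--gons by the $(\delta+B)$--thinness of $(\lambda,c)$--quasi-geodesic $n$--gons.
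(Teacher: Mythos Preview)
The paper does not actually give a proof of this lemma: it simply records it as ``a version of \cite[Proposition~3.2]{OsinPF}'' and moves on. So there is no argument in the paper to compare your proposal against; the authors are treating the passage from geodesic to $(\lambda,c)$--quasi-geodesic sides as a routine extension of Osin's result.

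Your reduction strategy is the natural one, but the step you flag as the ``main obstacle'' is a genuine gap, and your suggested patch does not close it. Suppose $p_i\in I$ becomes connected in $\mathcal P'$ to a short component $s$ of some $q_j$ with $\d_X(s_-,s_+)<a$. You are right that BCP rules out long such $s$. But knowing $s$ is short does \emph{not} bound $\d_X\big((p_i)_-,(p_i)_+\big)$: all four vertices $(p_i)_\pm,s_\pm$ lie in one $H_\omega$--coset, so the two ``connecting'' edges $(p_i)_-\!\to s_-$ and $(p_i)_+\!\to s_+$ have $S$--length~$1$ but unbounded $X$--length. Thus $p_i$ is no longer isolated in $\mathcal P'$, so Osin's geodesic statement gives you nothing for it, and there is no evident way to recover a bound on $\d_X((p_i)_-,(p_i)_+)$ from the shortness of $s$. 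Your sentence ``these contribute at most $O(n)$ in total $X$--length'' refers to the short components $s$, not to the $p_i$ themselves, and in any case the number of components along a geodesic $q_j$ is bounded only by $\ell(q_j)$, not by~$n$.

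Your alternative at the end is the correct route and is exactly what the citation is gesturing at: Osin's proof of \cite[Proposition~3.2]{OsinPF} uses only the $\delta$--thinness of geodesic polygons in $\Gamma(G,X\cup\mathcal H)$, so replacing each $(\lambda,c)$--quasi-geodesic side by a geodesic with the same endpoints and absorbing the Hausdorff constant $B=B(\delta,\lambda,c)$ into the thinness constant lets the original argument run unchanged, yielding $D=D(G,X,\lambda,c)$. If you want a clean written reference for this quasi-geodesic form, it also appears (with proof) in the Antol\'in--Ciobanu paper \cite{AC14} that the authors cite elsewhere.
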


If $G$ is finitely generated, then $\Omega$ is finite (see \cite[Theorem 1.1]{Osin06}) and  $H_\mu \cap H_\omega$ is finite, for $\omega \neq \mu$ (see \cite[Theorem 1.4]{Osin06}). It follows that
\begin{lem}
\label{lem:nocancellation}
There exists $m$ such that if $h \in H_\mu \cap H_\omega$, for $\omega \neq \mu$, then $\abs{h}_X \leq m$.
\end{lem}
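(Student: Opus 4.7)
The plan is to combine directly the two facts stated just before the lemma, which are attributed to Osin's \cite{Osin06}: first, since $G$ is finitely generated, the peripheral index set $\Omega$ is finite (Theorem 1.1 there); second, for any two distinct $\omega,\mu \in \Omega$, the intersection $H_\mu \cap H_\omega$ is a finite subgroup of $G$ (Theorem 1.4 there). Because the number of unordered pairs $\{\mu,\omega\} \subset \Omega$ with $\mu \neq \omega$ is finite, and each corresponding intersection is itself finite, the set
$$F := \bigcup_{\substack{\mu,\omega \in \Omega \\ \mu \neq \omega}} \bigl(H_\mu \cap H_\omega\bigr)$$
is a finite subset of $G$.

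I would then simply set
$$m := \max\{\abs{h}_X \mid h \in F\}.$$
Since $X$ generates $G$ as a monoid (as emphasised at the end of the introduction), every element of $G$ has a well-defined finite $X$--length, so the maximum over the finite set $F$ exists and is finite. By construction, any $h \in H_\mu \cap H_\omega$ with $\mu \neq \omega$ lies in $F$ and therefore satisfies $\abs{h}_X \leq m$, as required.

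There is really no obstacle here: the lemma is a packaging statement, extracting a uniform constant from the finiteness of $\Omega$ together with the finiteness of pairwise intersections of peripherals. If one wanted to be more self-contained, the finiteness of $H_\mu \cap H_\omega$ can be recovered from the almost malnormality of the peripheral family, but since the preceding paragraph already cites Osin's results this is unnecessary.
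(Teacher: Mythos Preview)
Your argument is correct and matches the paper's approach exactly: the paper simply states the lemma as an immediate consequence (``It follows that\ldots'') of the two cited facts from Osin---finiteness of $\Omega$ and finiteness of each pairwise intersection $H_\mu\cap H_\omega$---which is precisely the finite-union-of-finite-sets reasoning you spell out.
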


Given a word $u\equiv x_1x_2\ldots x_n$ on $X$, we can rewrite this as a (potentially) shorter word $\wh{u}$ on $S$  by replacing, from left to right, maximal subwords of $u$ whose letters are all contained in the same subgroup $H_\omega$ with the element of $H_\omega$ that subword represents. (In the language of \cite{AC14}, $\wh{u}$ is derived from $u$.)

The following  is a specific case  of \cite[Lemma 5.3]{AC14}.
\begin{lem}\label{lem:gen set lemma}
	There exists $\lambda\geq 1$, $c\geq 0$ and a finite generating set $X$ for $G$ such that
	
	\begin{enumerate}
	\item[{\rm (1)}] $\gen{X\cap H_\omega}=H_\omega$ for all $\omega \in \Omega$, 
	\item[{\rm (2)}] the natural embedding $\ga(H_\omega,X\cap H_\omega)$ into $\ga(G,X)$ is an isometric embedding for all $\omega\in \Omega$,
	\item[{\rm (3)}] if $u$ is a geodesic word over $X$ representing some element of $H_\omega$, then $u$ is a word over $H_\omega \cap X$,
	\item[{\rm (4)}] if $u$ is a geodesic word on $X$ then $\wh{u}$ will be a $(\lambda,c)$--quasi-geodesic in $\ga $ and no subword $\wh{u}_1$ of $\wh{u}$ with $\ell(\wh{u}_1)>1$ represents an element of $\cH$.
	\end{enumerate}
\end{lem}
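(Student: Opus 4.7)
The plan is to construct $X$ in two stages. Start with any finite symmetric generating set $X_0$ of $G$, fix a large constant $R>0$ to be determined, and set
\[X := X_0 \cup \bigcup_{\omega\in\Omega}\bigl\{h\in H_\omega : |h|_{X_0\cup\cH}\leq R\bigr\}.\]
Because $\Omega$ is finite and every relative ball in $\ga(G,X_0\cup\cH)$ is finite, $X$ is a finite generating set of $G$. Property (1) is immediate once $R$ is chosen large enough that, for each $\omega\in\Omega$, the added ball contains a generating set of $H_\omega$; this can be arranged uniformly since $\Omega$ is finite and each $H_\omega$ is finitely generated by Osin.

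For (2) and (3), the argument is a shortcut in the relative Cayley graph. Let $u\equiv x_1\cdots x_n$ be an $X$--geodesic representing some $h\in H_\omega$, and form the loop in $\ga(G,X\cup\cH)$ consisting of the path labelled by $u$ together with the single $H_\omega$-edge from $h$ back to $1$. If some letter $x_i$ fails to lie in $H_\omega$, decompose $u$ into maximal runs of letters belonging to a common parabolic subgroup and apply the $n$--gon lemma (Lemma 4.2) to the resulting polygon, with the closing $H_\omega$-edge as an isolated component. This bounds the $X_0$-length of the closing edge by a constant $D=D(G,X_0)$, and choosing $R>D$ forces a contradiction. The parallel case where $u$ contains letters from $H_\mu$ with $\mu\neq \omega$ is handled by combining the same argument with the bounded-intersection lemma. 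Hence every letter of $u$ lies in $H_\omega\cap X$, yielding both (3) and (2).

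For (4), write $\hat u$ for the word obtained by collapsing maximal monoparabolic runs of $u$ into single $S$-letters. Quasi-geodesicity of $\hat u$ in $\ga(G,X\cup\cH)$ follows from a similar shortcut argument: any significant discrepancy between $\ell(\hat u)$ and $\d_{X\cup\cH}(\hat u_-,\hat u_+)$ would give an $S$-word $v$ connecting the endpoints much shorter than $\hat u$; lifting $v$ back to an $X$-word, using that every parabolic letter of $v$ of bounded relative length already lies in $X$ by construction, would then shorten $u$, contradicting geodesicity. The no-parabolic-subword condition is similar: any subword of $\hat u$ of length greater than one representing a parabolic element could be replaced by a single $S$-letter and then re-expanded into a strictly shorter $X$-subword.

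The main technical obstacle is the bootstrapping of constants: $R$ must simultaneously dominate the $n$--gon constant $D$, the bound $m$ from the bounded-intersection lemma, and the bounded coset penetration constant $a(\lambda)$ needed to extract the uniform quasi-geodesic constants $(\lambda,c)$ in (4). Closing this circle of dependencies, so that a single choice of $R$ (and hence $X$) works for all four clauses at once, is precisely the delicate balancing accomplished in \cite[Lemma 5.3]{AC14}, from which the present statement is extracted.
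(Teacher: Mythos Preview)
The paper does not actually prove this lemma; it simply records it as ``a specific case of \cite[Lemma 5.3]{AC14}''.  So there is nothing to compare on the paper's side beyond the citation.  Your final paragraph acknowledges this, and in that sense your proposal is consistent with what the paper does.

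That said, the sketch you give before deferring to \cite{AC14} contains a genuine error.  You define
\[
X := X_0 \cup \bigcup_{\omega\in\Omega}\bigl\{h\in H_\omega : |h|_{X_0\cup\cH}\le R\bigr\},
\]
and justify finiteness by saying that ``every relative ball in $\ga(G,X_0\cup\cH)$ is finite''.  This is false: the relative Cayley graph is locally infinite, and every nontrivial $h\in H_\omega$ satisfies $|h|_{X_0\cup\cH}=1$.  Hence for any $R\ge 1$ your set $X$ contains all of $H_\omega$ and is infinite whenever some parabolic is infinite.  The construction in \cite{AC14} instead enlarges $X_0$ by parabolic elements of bounded $X_0$--length (equivalently, bounded $\d_X$--distance), which does yield a finite set.

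There is a secondary gap in your argument for (2)--(3).  You apply Lemma~\ref{lem:n-gon} to the polygon with the closing $H_\omega$--edge declared isolated, but you give no reason for isolation: after collapsing $u$ into maximal monoparabolic runs, any $H_\omega$--run among them is a candidate to connect to the closing edge.  Moreover, even granting isolation, the conclusion $|h|_{X_0}\le D$ does not contradict your choice of $R$, since your $R$ bounds relative length, not $X_0$--length.  With the corrected definition of $X$ (using $X_0$--length), this step can be made to work, but it needs an actual argument that either the closing edge is isolated or, if not, one can shorten $u$; this is part of the ``delicate balancing'' you allude to, and it is not supplied by your sketch.
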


\section{Bounded PCL for relatively hyperbolic groups}\label{sec:proof}

Let $G$ be hyperbolic relative to subgroups $H_\omega$ for $\omega \in \Omega$. Let $X$ be the finite generating set of $G$ from Lemma \ref{lem:gen set lemma}, and as above let $\Gamma$ be the Cayley graph of $G$ with respect to the generating set 
	$$S=X \cup \bigcup_{\omega \in \Omega} H_\omega \setminus \{1\}.$$
Let $\delta$ be the hyperbolicity constant for $\Gamma$. We denote by $d_X$ and $d_S$ the word metrics induced by the generating sets $X$ and $S$ respectively, and by $\modulus{\cdot}_X$ and $\abs{\cdot}_S$ the corresponding word lengths.

We will often use the phrases $X$--length or $S$--length to refer to the length of elements under the corresponding generating set.

Let $u,v$ be geodesic words on $X$ which represent conjugate elements of $G$. Let $w$ be a \pcconj{} for $u$ and $v$ for which $\modulus{w}_S$ is minimal.
Let $s_1\ldots s_n$ be a geodesic word on $S $ representing $w$, so $n=\modulus{w}_S$.  Let $w_i$ be the length $i$ prefix of $w$.

Throughout we will refer to a hexagon $\cQ $ in $\Gamma$ which will have quasi-geodesic sides, defined as 
follows.  We remark that $\cQ $ is similar to the hexagon of Section \ref{sec:hyp cayley graphs}, but for the appropriate words in $S$. See Figure \ref{fig:hexagon}.
 It will have vertices at $1,w,wu_2,$ $wu',v',v_2$, where $u\equiv u_1u_2$ and $u'\equiv u_2u_1$, and similarly for $v$, so $wu'=v'w$. Along the opposite sides from $1$ to $w$ and $v'$ to $v'w$ we read $s_1\ldots s_n$. Along the other four sides we read the $(\lambda,c)$--quasi-geodesic words $\wh{u_1},\wh{u_2},\wh{v_1}$ and $\wh{v_2}$, which are obtained from $u_1,u_2,v_1,v_2$ respectively as in Lemma \ref{lem:gen set lemma}.

Notice that if every edge in one of the two sides of the hexagon $\cQ$ labelled by $w$ is isolated in $\cQ$ then we have
		$$\modulus{w}_X\leq 8D\modulus{w}_S$$
where $D$ is the constant of Lemma \ref{lem:n-gon}. Indeed, we take each edge individually and treat it as the eighth side of a geodesic $8$--gon. Lemma \ref{lem:n-gon} gives us that the $X$--length of each $s_i$ is bounded above by $8D$.

Thus it will be helpful, and indeed necessary, to obtain a bound on the $S$--length of $w$. Note that we cannot use methods of Proposition \ref{propspecial:pcl hyp} since we lack local finiteness. Also we see that the interesting cases are when some edge is not isolated. In light of this, we first collect some lemmas to help deal with the non-isolated edges. A frequent tactic that we appeal to is that of finding a shorter path from a side of $\cQ $ labelled by a subword of $u$ to a side labelled by a subword of $v$. We call this {\it taking a short-cut}. This process  gives a new \pcconj{} of shorter $S$--length.

If the $i$--th edge of the path from $1$ to $w$ is connected to the $i$--th edge in the path from $v'$ to $v'w$, then we call this a \emph{horizontal band}.
If the first or last edge of either side corresponding to $w$ is connected to the adjacent edge on the neighbouring $u$ or $v$--side of the hexagon $\cQ$ then we call this a \emph{corner chunk}. See Figure \ref{fig:corner_and_horizontal}.

\begin{figure}[h!]
	\labellist
	\small \hair 2pt
		\pinlabel {corner chunk} at -50 400
		\pinlabel {horizontal band} at -70 230
		\pinlabel $w_i$ at 42 270
		\pinlabel $v'w_i$ at 310 270
		\pinlabel $v'w_{i-1}$ at 325 225
	\endlabellist
	\includegraphics[height=8cm]{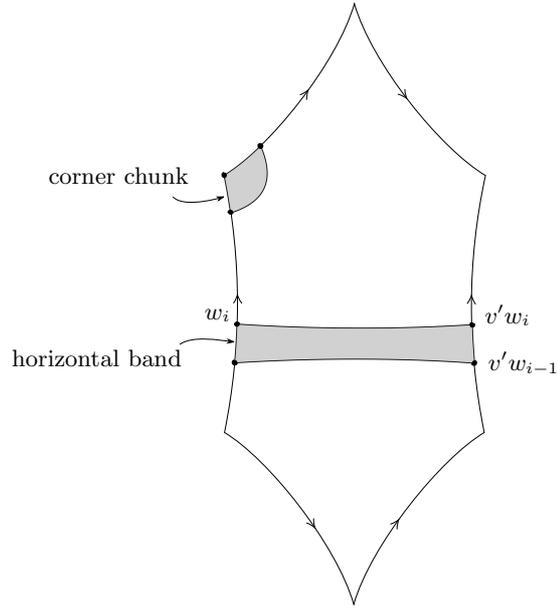}
	\caption{The shaded regions represent edges in the same coset of a parabolic subgroup.}\label{fig:corner_and_horizontal}
\end{figure}

\begin{lem}\label{lem:bands}
If $w\equiv s_1s_2\dots s_n$ is a \pcconj{} of minimal $S$--length 
	and the edge labelled by $s_i$ in the side from $1$ to $w$ is not isolated, then:
	\begin{enumerate} 
		\item[{\rm (1)}] if $1<i<n$ then it is connected via a horizontal band to the opposite $w$--side of $\cQ $;
		\item[{\rm (2)}] if $i=1,n$ and it is connected to the opposite $w$--side, then it is connected via a horizontal band.
	\end{enumerate}
\end{lem}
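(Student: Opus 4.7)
The plan is to argue by contradiction against the minimality of $\abs{w}_S$: I will show that a non-isolated $s_i$ violating either (1) or (2) lets me produce either a strictly shorter $S$-word for $w$, contradicting geodesicity, or a strictly shorter \pcconj{} for a suitable pair of cyclic permutations of $u,v$, contradicting minimality. A standing observation I will use repeatedly is that, since $w\equiv s_1\cdots s_n$ is an $S$-geodesic, no subword of $S$-length $\ge 2$ can represent an element of a single $H_\omega$; in particular, no two consecutive letters of $w$ lie in the same $H_\omega$.

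Fix an $H_\omega$-component $\sigma$ of $\cQ$ connected to $s_i$ and split into cases according to the side of $\cQ$ containing $\sigma$. First, if $\sigma=s_k$ sits on the same left $w$-side with $k\ne i$, connectedness places $w_{i-1},w_i,w_{k-1},w_k$ in a single coset of $H_\omega$; the subword of $w$ spanning these four vertices then represents a parabolic element of $S$-length $\ge 2$, contradicting geodesicity.

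Second, for (1), suppose $\sigma$ lies on one of the four quasi-geodesic sides; I treat $\sigma\subset\wh{v_2}$, the remaining three being symmetric. Writing the maximal parabolic subword as $v_2\equiv y_1 t y_2$ on $X$ with $t\in H_\omega$ corresponding to $\sigma$, the connection yields $h_1:=w_{i-1}^{-1}y_1\in H_\omega$. I would then shift the split of $v$ so that the new cyclic permutation is $v''=(ty_2)(v_1y_1)$: the identity $y_1^{-1}v'y_1=v''$ combined with $wu'=v'w$ forces $w_0:=y_1^{-1}w$ to satisfy $w_0u'=v''w_0$, so $w_0$ is itself a \pcconj. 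Written on $S$, $w_0\equiv h_1^{-1}s_is_{i+1}\cdots s_n$, and since both $h_1^{-1}$ and $s_i$ lie in $H_\omega$ they combine into a single $S$-letter. Hence $\abs{w_0}_S\le n-i+1\le n-1$ (using $i\ge 2$), which contradicts minimality.

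Finally, for both (1) and (2), it remains to handle $\sigma=s_j$ on the opposite right $w$-side with $j\ne i$; by symmetry take $j>i$. The label $s_j$ on the right $w$-side is the same letter as on the left, so $s_j\in H_\omega$. If $j=i+1$, then $s_i$ and $s_{i+1}$ are adjacent $H_\omega$-letters of $w$, contradicting geodesicity. For $j\ge i+2$, the plan is to combine the shortcut $v'w_{j-1}=w_{i-1}h_1$, where $h_1:=w_{i-1}^{-1}v'w_{j-1}\in H_\omega$, with compatible shifts of the cyclic splits of both $u$ and $v$ to assemble a \pcconj{} of $S$-length at most $(i-1)+1+(n-j+1)=n-(j-i-1)<n$, again contradicting minimality. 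The key algebraic input is the identity $h_1s_j=s_ih_2$ in $H_\omega$, with $h_2:=w_i^{-1}v'w_j$, which expresses the $H_\omega$-quadrilateral spanned by $s_i,s_j,h_1,h_2$ inside a single coset. The main obstacle will be to choose exactly which shifts of the $u$- and $v$-splits turn this shortcut into a bona fide \pcconj, rather than merely an element of $G$ conjugating two otherwise unrelated elements.
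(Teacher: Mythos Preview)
Your approach is the same as the paper's---take short-cuts to produce a \pcconj{} of strictly smaller $S$--length---and your first two cases (connection within the same $w$--side; connection to a $\wh{u}$-- or $\wh{v}$--side) are correct and more detailed than the paper's one-line reference to Figure~\ref{fig:X-skinny-middle}.

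The genuine gap is the one you yourself flag in the third case. You build the short-cut path $s_1\cdots s_{i-1}\cdot h_1\cdot s_j\cdots s_n$ and then worry about which cyclic shifts of $u$ and $v$ it witnesses. In fact no new shifts are required: compute the element this path represents,
\[
w_{i-1}\cdot h_1\cdot (w_{j-1}^{-1}w)\;=\;w_{i-1}\cdot w_{i-1}^{-1}v'w_{j-1}\cdot w_{j-1}^{-1}w\;=\;v'w,
\]
and from $wu'=v'w$ one gets $(v'w)u'=v'(wu')=v'(v'w)$, so $v'w$ is a \pcconj{} for the \emph{same} pair $u',v'$. Thus $\abs{v'w}_S\le n-(j-i-1)<n$ for $j\ge i+2$, contradicting minimality. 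If instead you use the diagonal $h:=w_{i-1}^{-1}v'w_j\in H_\omega$ and the path $s_1\cdots s_{i-1}\cdot h\cdot s_{j+1}\cdots s_n$, the length drops to $n-(j-i)$, which also disposes of $j=i+1$ and makes your separate geodesicity argument redundant. For $j<i$ the analogous short-cut $h':=w_i^{-1}v'w_{j-1}\in H_\omega$ gives the path $s_1\cdots s_{j-1}\cdot(h')^{-1}\cdot s_{i+1}\cdots s_n$ of length $n-(i-j)$ representing $(v')^{-1}w$, again a \pcconj{} for $u',v'$; so your ``by symmetry'' is justified, but it is a symmetry of conjugators ($w\mapsto (v')^{\pm 1}w$), not of the hexagon itself.
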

\begin{proof}
Suppose it is connected to some other edge of $\cQ $. Figure \ref{fig:X-skinny-middle} shows how we can take a short-cut and find a \pcconj{} of shorter $S$--length. 
\end{proof}

\begin{figure}[h!]
	
	\labellist
	\small \hair 2pt
	\pinlabel $\textrm{(a)}$ at 120 600
	\pinlabel $w_i$ [r] at 12.3 263.7
	\pinlabel $v'w_i$ [r] at 228.8 263.7
	\pinlabel $\wh{v_2}$ at 67 68
	\pinlabel $\wh{v_1}$ at 177 68
	\pinlabel $\wh{u_2}$ at 60 488
	\pinlabel $\wh{u_1}$ at 180 488
	
	\pinlabel $\textrm{(b)}$ at 570 600
	\pinlabel $w_{i-1}$ at 315 255
	\pinlabel $v'w_j$ at 590 347
	\pinlabel $\wh{v_2}$ at 396 68
	\pinlabel $\wh{v_1}$ at 506 68
	\pinlabel $\wh{u_2}$ at 389 488
	\pinlabel $\wh{u_1}$ at 509 488

	\endlabellist
	
	\vspace{8mm}
	\includegraphics[height=8cm]{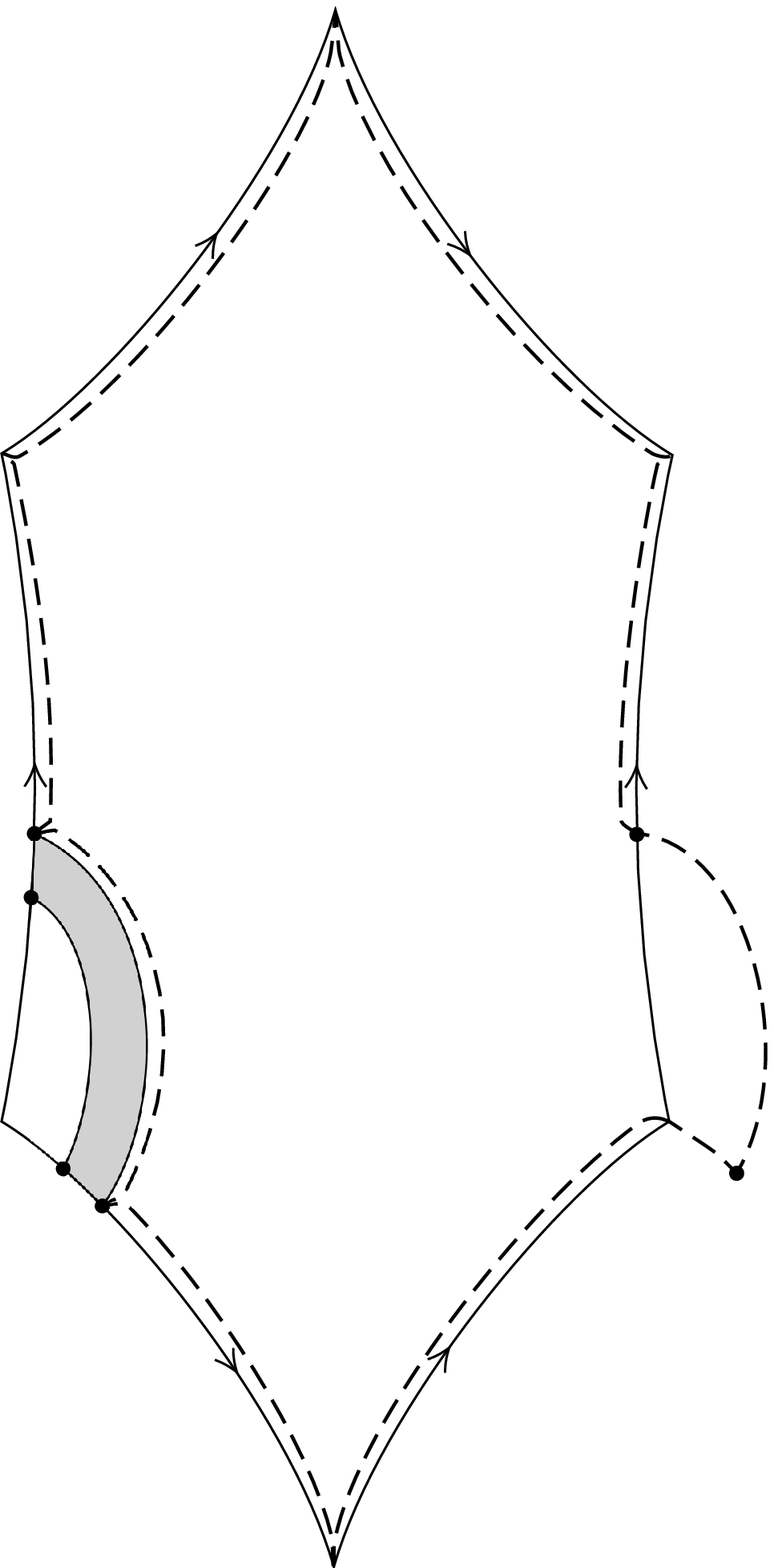} \hspace{5mm}
	\includegraphics[height=8cm]{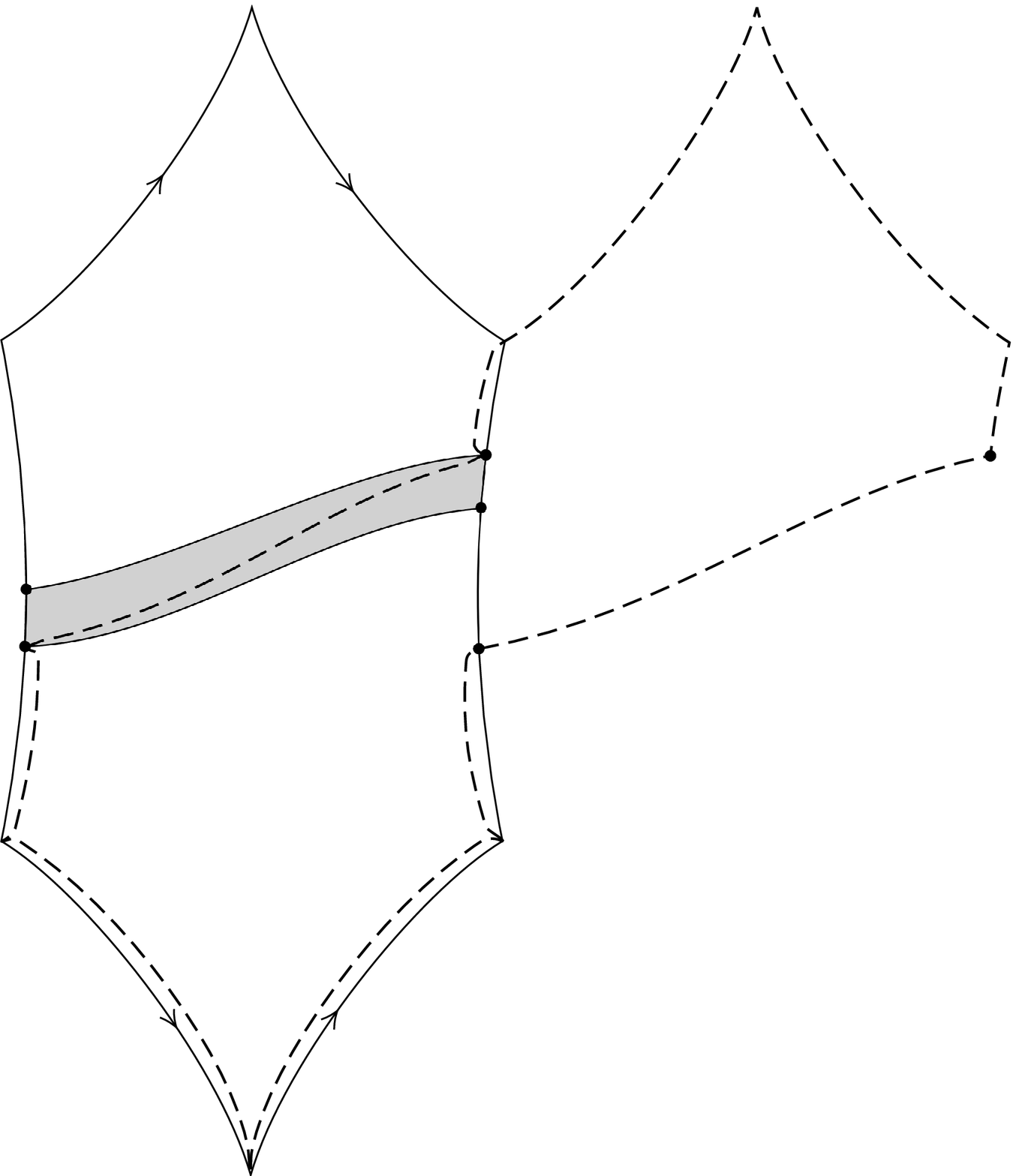} 
	\caption{The shaded regions represent  edges with labels in the same $H_\omega$. Note that the four vertices in a shaded region span a complete subgraph in $\ga$.  (a) A short-cut is obtained by following the dotted path and obtaining a \pcconj{} of shorter $S$--length. (b) Since $d_S(w_{i-1},v'w_j)=1$, the dotted path represents a $S$--shorter cyclic conjugator whenever $|i-j|\geq 1$. 
	}\label{fig:X-skinny-middle}
\end{figure}

\begin{lem}\label{lem:bounding_middle_conj}
	If $w\equiv s_1s_2\dots s_n$ is a \pcconj{} of minimal $S$--length then there exists a constant $M=M(G,X)$ such that for each $1<i<n$ we can assume that
	$$\modulus{s_i}_X \leq M.$$
\end{lem}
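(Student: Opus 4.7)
The plan is to split into two cases by Lemma~\ref{lem:bands}(1): either $s_i$ is an isolated component of $\cQ$, or it lies in a horizontal band. In the isolated case, the bound on $|s_i|_X$ follows from refining $\cQ$ into an octagon and applying Lemma~\ref{lem:n-gon}. In the band case, one first bounds the $X$-length of the horizontal edges of the band and then transfers this to a bound on $s_i$ itself by replacing $s_i$ with a shorter representative in its conjugator coset within $H_\omega$.

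\textbf{Isolated case.} Refine the left $w$-side of $\cQ$ by subdividing it at $w_{i-1}$ and $w_i$. This turns $\cQ$ into an octagon whose sides are: the two $(\lambda,c)$-quasi-geodesic $u$-sides, the two $(\lambda,c)$-quasi-geodesic $v$-sides, the right $w$-side (an $S$-geodesic), the sub-geodesics $s_1\cdots s_{i-1}$ and $s_{i+1}\cdots s_n$, and the single edge $s_i$. Because $w$ is an $S$-geodesic, no two consecutive letters of $w$ lie in a common parabolic, so the subdivision does not introduce any new connectivity for $s_i$, which therefore remains an isolated component of the octagon. Lemma~\ref{lem:n-gon} applied with $I=\{s_i\}$ yields $|s_i|_X\leq 8D$.

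\textbf{Band case.} Here $s_i\in H_\omega$, and there are single-edge $H_\omega$-paths $h_{i-1}, h_i$ with the vertices $w_{i-1}, w_i, v'w_{i-1}, v'w_i$ in one $H_\omega$-coset and $s_i h_i = h_{i-1} s_i$. I would bound $|h_{i-1}|_X$ using the pentagon with vertices $1, w_{i-1}, v'w_{i-1}, v', v_2$ and sides: two $S$-geodesic $w$-segments, the edge $h_{i-1}$, and the two quasi-geodesic $v$-sides. The isolation of $h_{i-1}$ uses that (i) the $S$-geodesic property of $w$ forbids any $H_\omega$-letter of the $w$-segments from lying in the coset $w_{i-1}H_\omega$ (else $w_{i-1}$ would admit a strictly shorter $S$-expression), and (ii) the minimality of $|w|_S$, combined with a short-cut argument in $\cQ$ analogous to Lemma~\ref{lem:bands}, rules out such coincidences with $H_\omega$-letters of $\wh{v_1}$ or $\wh{v_2}$. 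Lemma~\ref{lem:n-gon} then gives $|h_{i-1}|_X\leq 5D$, and an analogous pentagon above the band bounds $|h_i|_X\leq 5D$.

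Finally, to bound $|s_i|_X$, I would exploit the flexibility that any $s'_i\in H_\omega\setminus\{1\}$ with $(s'_i)^{-1}h_{i-1}s'_i=h_i$ may replace $s_i$ in $w$ while preserving both $|w|_S$ and the \pcconj{} status, since $s_i$ is precisely a conjugator of $h_i$ to $h_{i-1}$ in $H_\omega$. Because $|h_{i-1}|_X, |h_i|_X\leq 5D$, a shortest such conjugator has $X$-length at most $\CLF_{H_\omega, X\cap H_\omega}(10D)$; in the degenerate case $h_{i-1}=h_i$, the element $h_i$ itself (lying in $C_{H_\omega}(h_i)$) provides a non-trivial replacement of bounded length. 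Taking the maximum over the finitely many parabolic subgroups gives a constant $M=M(G,X)$. I expect the main obstacle to lie in step (ii) above: concretely constructing the short-cut in $\cQ$ from any $H_\omega$-coincidence on the $v$-sides and verifying that it strictly decreases $|w|_S$, thereby contradicting minimality.
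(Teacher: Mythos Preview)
Your proposal is correct and follows essentially the same route as the paper. Both arguments split via Lemma~\ref{lem:bands} into the isolated case (handled by Lemma~\ref{lem:n-gon} on an octagon, yielding $|s_i|_X\le 8D$) and the horizontal-band case; in the latter, both bound the $X$--length of the two horizontal edges $g,h$ (your $h_i,h_{i-1}$) by $5D$ using the pentagons above and below the band, and then replace $s_i$ by a minimal-length conjugator in $H_\omega$ between this bounded pair. The paper phrases the last step as choosing from the finite set of shortest conjugators between pairs of elements of $X$--length $\le 5D$, which is exactly your $\CLF_{H_\omega,X\cap H_\omega}(10D)$ bound together with your degenerate-case remark.

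The only presentational difference is that where you separate the isolation argument for $h_{i-1}$ into (i) connections to the $w$--segments and (ii) connections to the $v$--sides, the paper treats both uniformly as short-cuts (its Figure~5): any connection of $g$ to another edge of the upper pentagon yields a point $x$ with $d_S(x,w_{i-1})=1$ (since $w_{i-1}$ lies in the same $H_\omega$--coset via the band), and the resulting dotted hexagon gives a strictly $S$--shorter \pcconj{}. Your anticipated ``main obstacle'' in (ii) is thus resolved exactly as in Lemma~\ref{lem:bands}, and your case (i) is in fact subsumed by the same short-cut picture, so no separate geodesic argument is needed.
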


\begin{proof} 	
	If the edge labelled by $s_i$ in the side from $1$ to $w$ is isolated, then we are done provided $8D\leq M$, where $D$ is the constant of Lemma \ref{lem:n-gon}. Hence we assume it is not isolated, so by Lemma \ref{lem:bands} it is connected to the opposite $w$--side via a horizontal band. 
	
	Let $g$ be the label of the edge connecting $w_i$ to $v'w_i$. This edge is isolated in the pentagon with vertices at $w_i,w,v'w,v'w_i$, or else we can take a short-cut and obtain an $S$--shorter \pcconj{}, see Figure \ref{fig:more short cuts}. Hence $\modulus{g}_X\leq 5D$ by Lemma \ref{lem:n-gon}. If the edge from $w_{i-1}$ to $v'w_{i-1}$ has label $h$, then we can apply similar reasoning to get $\modulus{h}_X\leq 5D$.
	
	\begin{figure}[h!]
		\labellist
		\small \hair 2pt
			\pinlabel $g$ at 130 267
			\pinlabel $x$ at 44 446
			\pinlabel $\geq 2$ at -20 318
			\pinlabel $w_{i-1}$ at -10 230
			\pinlabel $w$ at 0 410
			\pinlabel $g$ at 560 267
			\pinlabel $x$ at 428 375
			\pinlabel $v'x$ at 695 380
			\pinlabel $\geq 2$ at 410 300
			\pinlabel $w_{i-1}$ at 420 230
		\endlabellist
		
		\includegraphics[height=8cm]{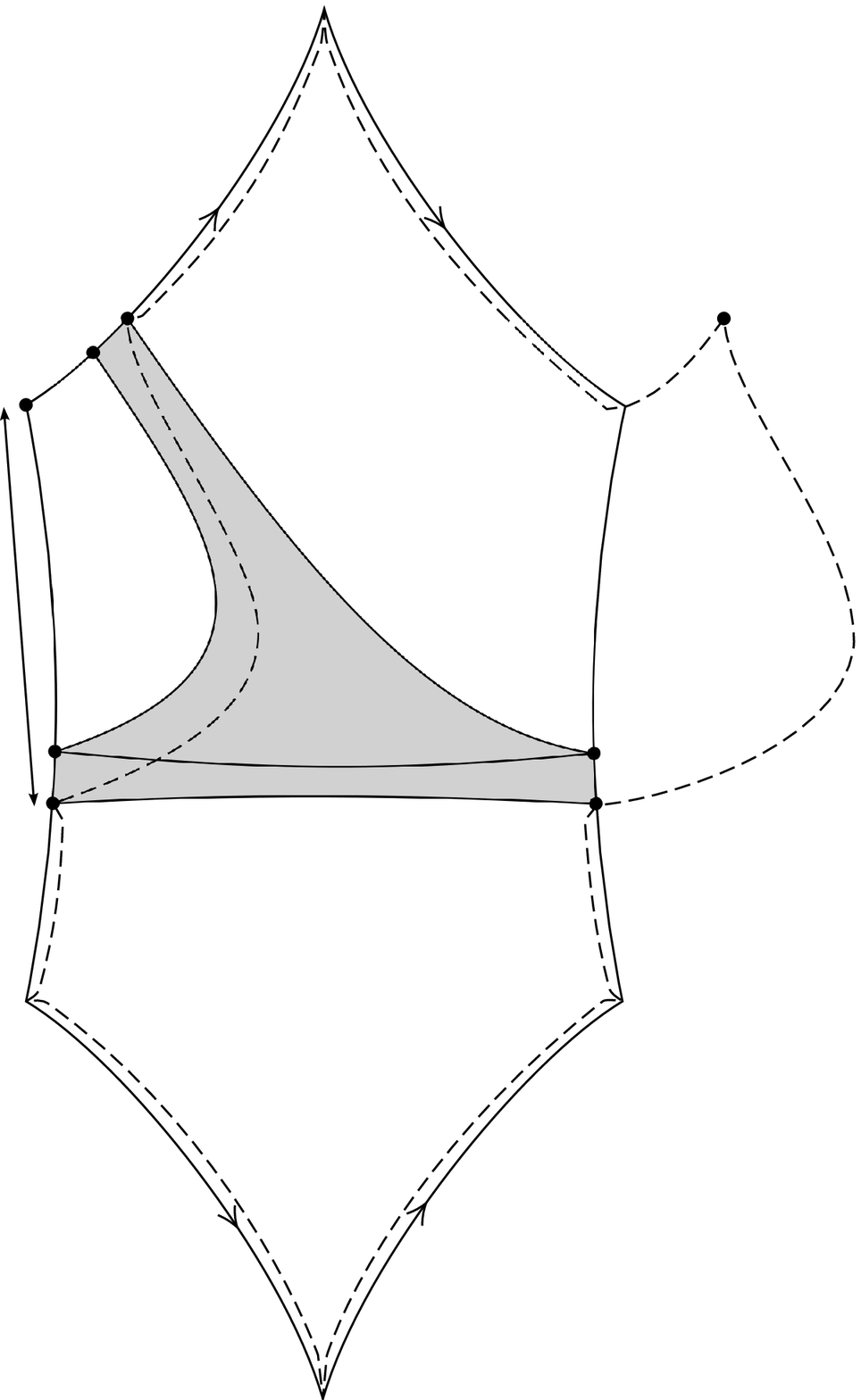} \hspace{1cm} \includegraphics[height=8cm]{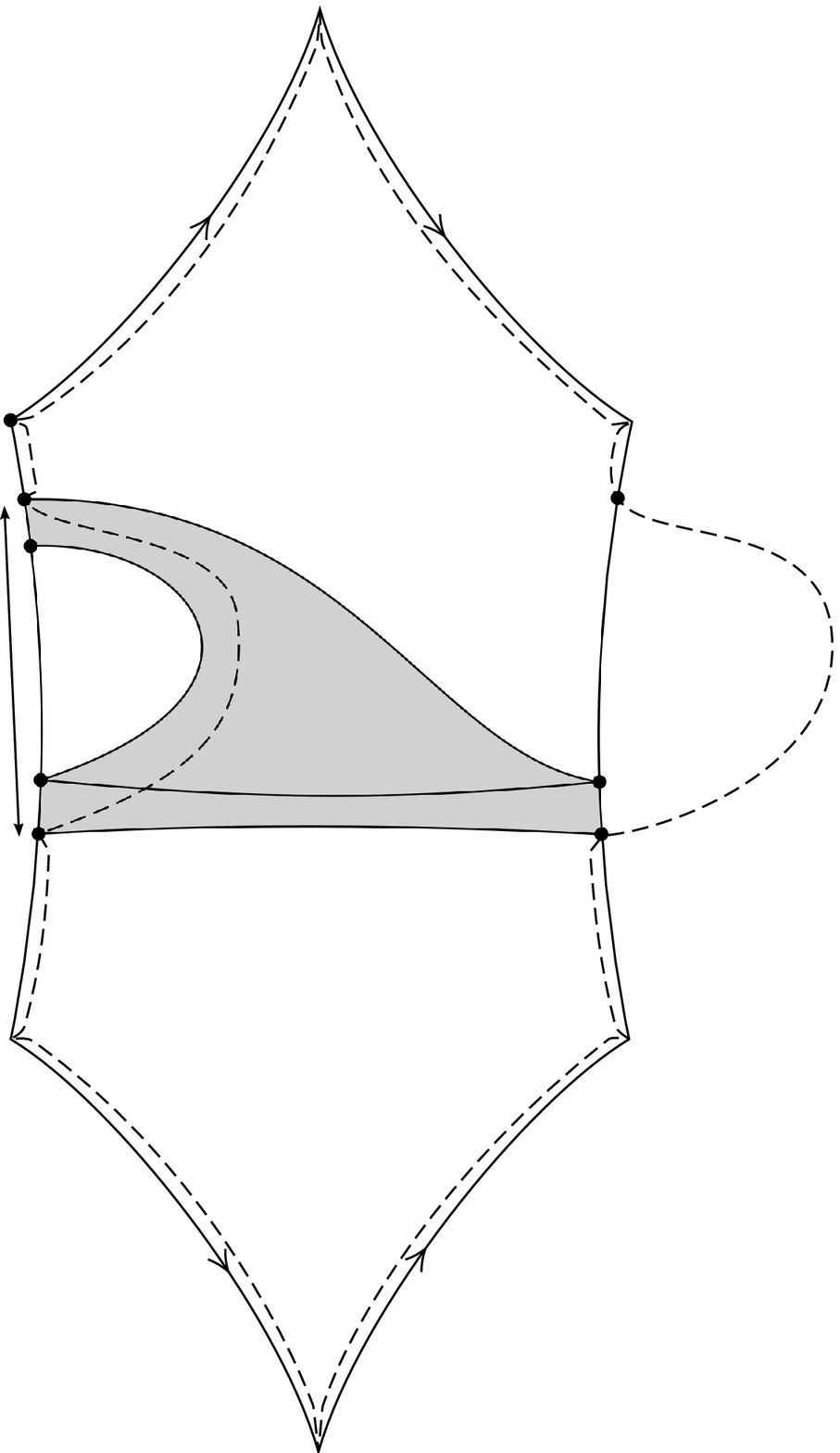}
		\caption{In both diagrams, since $d_S(x,w_{i-1})=1$, the dotted hexagon will give a \pcconj{} of shorter $S$--length. The shaded regions represent connected edges.}\label{fig:more short cuts}
	\end{figure}

	Since each of $s_i,g,h$ are all in some subgroup $H_\omega$, and the number of pairs of conjugate elements $g,h$ in $H_\omega$ of $X$--length bounded by $5D$ is finite, it follows that $s_i$ can be replaced by  an element that belongs to the finite set of minimal length conjugators between such elements. Hence, we choose $M$ large enough so that each of these elements has $X$--length at most $M$, and we are done.
\end{proof}

\begin{lem}\label{lem:long hexagon is skinny}
	Suppose that  $w$ is a \pcconj{} of minimal $S$--length such that {$\modulus{w}_S> 8\delta+B+1$}. Then for each $4\delta +B< i < n - 4\delta-B$,
	$$d_X(w_i,v'w_i) \leq 8\delta \max\{ 7 D, m\}$$
	where $B$ is the constant of Lemma \ref{lem:quasi-pcl on hyperbolic Cayley graph}, $D$ is the constant of Lemma \ref{lem:n-gon}, and $m$ is the constant of Lemma \ref{lem:nocancellation}.
	
	In particular, 
	$$\modulus{w}_S\leq  8\delta +B+1 + \abs{\ball_{X}(8\delta\max\{7D,m\})}.$$
\end{lem}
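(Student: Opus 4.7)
The plan is to first apply Lemma \ref{lem:quasi-pcl on hyperbolic Cayley graph} to obtain an $S$-distance bound between $w_i$ and $v'w_i$, then upgrade it to an $X$-distance bound by a careful edge-by-edge analysis of a short $S$-geodesic, and finally deduce the length bound on $w$ via pigeonhole.

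Concretely, apply Lemma \ref{lem:quasi-pcl on hyperbolic Cayley graph} to the hexagon $\cQ$, whose four non-$w$ sides are $(\lambda,c)$-quasi-geodesics by Lemma \ref{lem:gen set lemma}, obtaining $d_S(w_i,v'w_i)\le 8\delta$ for every $i$ in the stated range. Fix such an $i$ and let $g_1,\ldots,g_k$, with $k\le 8\delta$, be the labels of a geodesic $S$-path from $w_i$ to $v'w_i$. The claimed $X$-distance bound follows once we prove $\modulus{g_j}_X\le\max\{7D,m\}$ for every $j$.

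To analyse an individual edge $g_j$, I form the heptagon in $\Gamma$ whose seven sides are: the subpath of $w$ from $w_i$ to $w$; the quasi-geodesic sides $\wh{u_2}$ and $\wh{u_1}$; the reversed subpath of $w$ from $v'w$ to $v'w_i$; and the three subpaths $g_1\cdots g_{j-1}$, $g_j$, $g_{j+1}\cdots g_k$ of the short $S$-geodesic. The edge $g_j$ is a single $H_\omega$-component of this heptagon, so if it is isolated, Lemma \ref{lem:n-gon} immediately gives $\modulus{g_j}_X\le 7D$. If $g_j$ is not isolated, it is connected to some other $H_\omega$-component of the heptagon, and I inspect each possibility. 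A connection to a component of $\wh{u_1}$ or $\wh{u_2}$ produces a short-cut in the spirit of Figure \ref{fig:X-skinny-middle}(a), yielding a \pcconj{} of strictly smaller $S$-length and contradicting minimality of $w$. A connection to some other $g_{j'}$ lets us shorten the $S$-path, contradicting $d_S(w_i,v'w_i)=k$. A connection to a letter $s_\ell$ of $w$ on one of the two $w$-subpaths of the heptagon is controlled by Lemma \ref{lem:bands}: either Lemma \ref{lem:bands} yields yet another short-cut violating minimality of $w$, or $g_j$ and $s_\ell$ are forced to lie in distinct parabolic subgroups $H_\omega$ and $H_\mu$; in the latter case one exhibits a bounded element of $H_\omega\cap H_\mu$ involving $g_j$, and Lemma \ref{lem:nocancellation} gives $\modulus{g_j}_X\le m$. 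The symmetric heptagon using $\wh{v_1}$ and $\wh{v_2}$ together with the lower halves of the two $w$-sides handles the analogous configurations on the other side of $\cQ$.

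For the bound on $\modulus{w}_S$, apply pigeonhole to the map $i\mapsto w_i^{-1}v'w_i$. If two indices $i<j$ in the range $(4\delta+B,n-4\delta-B)$ produced the same element, then $w_0:=w_iw_j^{-1}w$ satisfies $w_0u'=w_iw_j^{-1}wu'=w_iw_j^{-1}v'w=v'w_iw_j^{-1}w=v'w_0$ and $\modulus{w_0}_S\le n-(j-i)<n$, contradicting the minimality of $\modulus{w}_S$. Hence the map is injective, and since each image lies in $\ball_X(8\delta\max\{7D,m\})$, the size of the range is bounded by $\modulus{\ball_X(8\delta\max\{7D,m\})}$, yielding the stated bound on $\modulus{w}_S$. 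The main obstacle is the last case of the edge-by-edge analysis, where $g_j$ is connected to a letter of $w$ itself: one has to simultaneously leverage Lemma \ref{lem:bands} to constrain how letters of $w$ can pair across the hexagon and Lemma \ref{lem:nocancellation} to bound parabolic-intersection elements, while ruling out each bad configuration by choosing the right short-cut. The other subcases collapse quickly to an immediate shortening of either $w$ or the short $S$-path.
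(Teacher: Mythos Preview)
Your overall architecture (apply Lemma~\ref{lem:quasi-pcl on hyperbolic Cayley graph} for the $S$--bound, upgrade edge-by-edge to an $X$--bound, then pigeonhole) matches the paper, and your pigeonhole paragraph is fine.  The gap is in the ``not isolated'' case analysis, where you treat the upper heptagon on its own.

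First, the subcase where $g_j$ is connected to an edge of $\wh{u_1}$ or $\wh{u_2}$ does \emph{not} produce a short-cut in the sense of Figure~\ref{fig:X-skinny-middle}(a).  In that figure the vertex close to a $u$-- or $v$--side lies on the $w$--side itself, so the shortened conjugator has $S$--length at most $(4\delta+B)+(n-i)<n$.  In your situation the nearby vertex is an interior vertex $q$ of $p$, and any path from a $v$--side vertex to the $u$--side through $q$ has $S$--length at least $i+1$ and may be as long as $i+8\delta$; since you only know $i<n-4\delta-B$, this need not be $<n$.  Second, in the subcase where $g_j$ is connected to a letter $s_\ell$ of $w$, your appeal to Lemma~\ref{lem:bands} is misplaced: that lemma concerns $s_\ell$ being non-isolated \emph{in $\cQ$}, whereas $g_j$ lies on the interior path $p$, not in $\cQ$.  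Moreover, connected components are by definition $H_\omega$--components for the \emph{same} $\omega$, so your ``distinct parabolics'' alternative cannot occur, and there is no element of $H_\omega\cap H_\mu$ to which Lemma~\ref{lem:nocancellation} applies.

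The paper closes this gap by arguing with \emph{both} pentagons at once.  Assume $d_X(e_-,e_+)>\max\{7D,m\}$ for an edge $e$ of $p$.  Then Lemma~\ref{lem:n-gon} forces $e$ to be non-isolated in the upper \emph{and} the lower pentagon, yielding components $f_1$ (above $p$) and $f_2$ (below $p$) of $\cQ$ connected to $e$; neither lies on $p$ since $p$ is geodesic.  The hypothesis $d_X(e_-,e_+)>m$ together with Lemma~\ref{lem:nocancellation} guarantees that $e$ is an $H_\omega$--component for a \emph{unique} $\omega$, so $f_1$ and $f_2$ are both $H_\omega$--components and hence connected to each other.  One now has two connected components of $\cQ$ lying on opposite sides of $p$, and this bridge directly contradicts either that $w$ is an $S$--geodesic, that $w$ has minimal $S$--length among \pcconj s, or that $\modulus{w}_S>1$.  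This simultaneous use of the two pentagons is the missing idea; your separate ``symmetric heptagon'' does not substitute for it.
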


\begin{proof}
	Consider the pentagons obtained from the hexagon $\cQ$ by cutting along a geodesic $p$ from $w_i$ to $v'w_i$. By Lemma \ref{lem:quasi-pcl on hyperbolic Cayley graph}, $\ell(p)\leq 8\delta$. Let $e$ be an edge of $p$. Suppose that $e$ is an $H_\omega$--component and $\d_X(e_-,e_+)> \max\{7D, m\}$.
	By Lemma \ref{lem:n-gon}, $e$ is not isolated in either of the pentagons, and moreover if $e$ is connected to a component $f$ {in $\cQ$},
	then since $\d_X(e_-,e_+)>m$, $f$ must be an $H_\omega$--component as well. Note also that since $p$ is geodesic, $e$ is not connected to any other component of $p$. Therefore there is a component in the upper pentagon connected to a component in the lower pentagon. This either contradicts that $w$ is geodesic, $w$ is a \pcconj{} of minimal $S$--length, or that $|w|_S>1.$ This proves that $\d_X(e_-,e_+)\leq \max\{7D, m\}$ and hence
	$$d_X(w_i,v' w_i) \leq (8\delta) \max\{7D, m\}.$$

	Suppose now that $|w|_S>  8\delta +B+1 + \abs{\ball_{X}(8\delta\max\{7D,m\})}$. Then there exists $4\delta+B<i<j<|w|_S-4\delta-B$ such that $w_i^{-1}v'w_i= w_j^{-1}v'w_j$ and therefore $s_1\dots s_{i}s_{j+1}\dots s_n$ is a shorter \pcconj{} of $u$ and $v$, contradicting the minimality assumption.
	\end{proof}

We now deal with the case where none of $u$, $v$ or any of their cyclic permutations  is in a parabolic subgroup. The following Lemma tells us that we may, if necessary, change $w$ to another \pcconj{} which is still of minimal $S$--length, but for which, as a word on $S$, we have a control on the $X$--length of its first and last letters.

\begin{lem}\label{lem:bounding_chunks}
	Suppose that no  cyclic permutation of $u$ is contained in a parabolic subgroup. There exists a \pcconj{} $w'$ with $\modulus{w'}_S=\modulus{w}_S$ such that $w'$ is represented by a word $s'_1\ldots s'_n$ on $S$ with
			$$\abs{s'_n}_X\leq \max\{m,7D\}$$
	where $D$ is the constant of Lemma \ref{lem:n-gon} and $m$ is the constant of Lemma \ref{lem:nocancellation}.
	
	Similarly, if no cyclic permutation of $v$ is contained in a parabolic subgroup, then $w'$ can be chosen with
			$$\abs{s'_1}_X\leq \max\{m,7D\}.$$
\end{lem}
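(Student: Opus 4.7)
The plan is to analyze the last letter $s_n$ of $w$ (viewed as a geodesic $S$-word $s_1 \cdots s_n$) sitting at the top-left corner of the hexagon $\cQ$ of Section \ref{sec:proof}. I will either show that $s_n$ is already isolated in $\cQ$, in which case treating $s_n$ as one side yields a $7$-gon whose other sides are the four $\wh{\cdot}$-quasi-geodesics together with $s_1 \cdots s_{n-1}$ and the right $w$-side, so that Lemma \ref{lem:n-gon} gives $\abs{s_n}_X \leq 7D$; or I will modify the configuration while keeping $\abs{w}_S = n$ fixed so that its last letter becomes isolated in a new hexagon. The argument for $s_1$ is entirely symmetric, using the analogous hypothesis on $v$ at the bottom-left corner.

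Suppose $s_n$ is not isolated, so $s_n \in H_\omega$ for some $\omega$ and is connected to another $H_\omega$-component $f$ of $\cQ$. I split into cases by the location of $f$. If $f$ lies on the opposite $w$-side, Lemma \ref{lem:bands}(2) forces a horizontal band at position $i = n$; but then $w$ and $v'w$ lie in a common $H_\omega$-coset, so $u' = w^{-1} v' w \in H_\omega$, contradicting the hypothesis that no cyclic permutation of $u$ is contained in a parabolic subgroup. If $f$ lies on one of the three non-adjacent sides $\wh{u_1}$, $\wh{v_1}$, or $\wh{v_2}$, then a short-cut of the kind used in Figure \ref{fig:X-skinny-middle} produces a \pcconj{} of strictly smaller $S$-length for a suitable cyclic permutation of $(u,v)$, contradicting the minimality of $\abs{w}_S$.

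The remaining possibility is that $f$ lies on the adjacent side $\wh{u_2}$. Since the vertex $w$ of $s_n$ must lie in the same $H_\omega$-coset as both vertices of $f$, Lemma \ref{lem:gen set lemma}(4) forces $f$ to be the first letter of $\wh{u_2}$, which is necessarily an $H_\omega$-edge labelled by the element $h \in H_\omega$ corresponding to the initial $H_\omega$-block $y_1 \cdots y_k$ of $u_2$. I then modify the configuration: set $w^{\mathrm{new}} := wh$ and replace the cyclic permutation $u'$ by $u^{(k)} := h^{-1} u' h$, leaving $v'$ unchanged. Then $w^{\mathrm{new}} u^{(k)} = v' w^{\mathrm{new}}$, and since $s_n, h \in H_\omega$ the $S$-word $s_1 \cdots s_{n-1}(s_n h)$ represents $w^{\mathrm{new}}$ with at most $n$ letters; minimality of $\abs{w}_S$ then forces $\abs{w^{\mathrm{new}}}_S = n$ and $s_n h \neq 1$, so the last letter of $w^{\mathrm{new}}$ as a geodesic $S$-word is $s_n h$.

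I now re-run the trichotomy on $w^{\mathrm{new}}$ with last letter $s_n h$. The first letter of the new adjacent side $\wh{u_2^{(k)}}$ no longer lies in $H_\omega$ (by construction the initial $H_\omega$-block of $u_2$ has been moved to the end), so by Lemma \ref{lem:gen set lemma}(4) no $H_\omega$-component of $\wh{u_2^{(k)}}$ shares an $H_\omega$-coset with $s_n h$; a horizontal band at the top corner would again force $u^{(k)}$ into a parabolic, still contradicting the hypothesis on $u$; and any connection of $s_n h$ to another side would still contradict minimality via a short-cut. Hence $s_n h$ is isolated in the new hexagon, and Lemma \ref{lem:n-gon} yields $\abs{s_n h}_X \leq 7D \leq \max\{m, 7D\}$. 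The symmetric argument for $s_1$ absorbs the final $H_\omega$-block of $\wh{v_2}$ into $w$ on the left and uses the hypothesis on $v$. The main obstacle is this shift step: one has to verify that the single modification $w \mapsto wh$ genuinely gives a new minimal-length \pcconj{} for another valid pair of cyclic permutations, and that a single such shift suffices so no iteration is needed, which is exactly what Lemma \ref{lem:gen set lemma}(4) guarantees.
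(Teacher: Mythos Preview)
Your overall strategy matches the paper's, and the shift $w\mapsto wh$ is exactly what the paper does at the end (your $s_n h$ is the paper's diagonal edge $f$). However, there is a genuine gap in your case analysis, and it occurs twice.

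You assert that if $s_n$ is connected to a component of the \emph{non-adjacent} side $\wh{u_1}$ then a short-cut yields a \pcconj{} of strictly smaller $S$--length. This is not correct. If $s_n$ is connected to an $H_\omega$--edge $f'$ in $\wh{u_1}$, the vertex $w_{n-1}$ lies in the same $H_\omega$--coset as $f'_{\pm}$, so the path $1\to w_{n-1}\to f'_{-}$ has $S$--length exactly $n$, not $n-1$. The resulting element is indeed a \pcconj{} for a new cyclic permutation of $u$ (with $v'$ unchanged), but it has the \emph{same} $S$--length, so no contradiction to minimality of $\abs{w}_S$ arises. The short-cut arguments from Lemma~\ref{lem:bands}/Figure~\ref{fig:X-skinny-middle} are designed for interior edges $s_i$ with $1<i<n$, where reaching the $u$--side in $i$ steps genuinely beats $n$; at the last edge you lose that margin. (Your claim \emph{is} correct for $\wh{v_1}$ and $\wh{v_2}$, since then a $v$--vertex is at $S$--distance $1$ from the $u$--vertex $w$.)

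The same issue recurs after your shift: you claim $s_n h$ cannot be connected to $\wh{u_1^{\mathrm{new}}}$ because that ``would still contradict minimality via a short-cut'', but again you only obtain a \pcconj{} of $S$--length $n$. Lemma~\ref{lem:gen set lemma}(4) rules out connections to non-initial edges of $\wh{u_2^{\mathrm{new}}}$, not to $\wh{u_1^{\mathrm{new}}}$, so your assertion that ``a single such shift suffices'' is unjustified. The paper closes this gap with an extra extremal choice: among all minimal-$S$--length \pcconj{}s that exhibit a corner chunk at $w$, pick one for which the $p_1$--side is shortest. Then a connection of $f$ to $p_1$ would produce (exactly as you observed) another minimal-$S$--length \pcconj{} with a corner chunk but with strictly shorter $p_1$, contradicting that choice. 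Adding this minimality hypothesis repairs your argument with no other changes.
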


\begin{proof}	
We prove the bound on the length of the last letter. The bound on the first is symmetric. Let $e$ be the last edge in the path from $1$ to $w$.

	If $e$ is isolated in $\cQ$, then we use Lemma \ref{lem:n-gon}, applied to the $7$--gon obtained by treating this edge as a seventh side, and get the required result.
	
	If $e$ is not isolated in $\cQ$, and it is connected to an edge in the other $w$--side, then by Lemma \ref{lem:bands} it must be contained in a horizontal band. However this contradicts the assumption that no cyclic permutation of $u$ is contained in a parabolic subgroup.
	
	We may therefore assume that $e$ is not isolated and is not connected to the opposite $w$--side. It must therefore be connected to an edge in a side labelled by a subword of $u$.

	Assume that $u\equiv u_1u_2$ and $u'\equiv u_2u_1$. Then the path $p_2$ from 
	$w$ to $wu_2$ has label $\wh{u_2}$ and the path $p_1$ from $wu_2$ to $wu'$ has label $\wh{u_1}$. After cyclic permutation of $u$, we can assume that $e$ is connected to the first edge of $p_2$ (see Figure \ref{fig:cyclic chunk}~(a)) and without loss of generality we can assume further that  $w$ is chosen so that the length of {$p_1$ is minimal among}  all cyclic permutations of $u$ in which the corresponding hexagon $\cQ $ for some \pcconj{} of minimal $S$--length has a corner chunk adjacent to the vertex $w$.

	\begin{figure}[h!]
		\labellist
			\small \hair 2pt
			\pinlabel $\textrm{(a)}$ at 197 630
			\pinlabel $w$ at -10 455
			\pinlabel $p_2$ at 60 524
			\pinlabel $e$ at -7 425
			\pinlabel $w$ at -10 146
			\pinlabel $p_1$ at 185 228
			\pinlabel $e$ at -7 116
			\pinlabel $\textrm{(b)}$ at 700 630
			\pinlabel $w$ at 470 445
			\pinlabel $e$ at 470 408
			\pinlabel $f$ at 515 435
			\pinlabel {new $p_1$} at 930 580
		\endlabellist
		\vspace{15pt}
		\includegraphics[height=8cm]{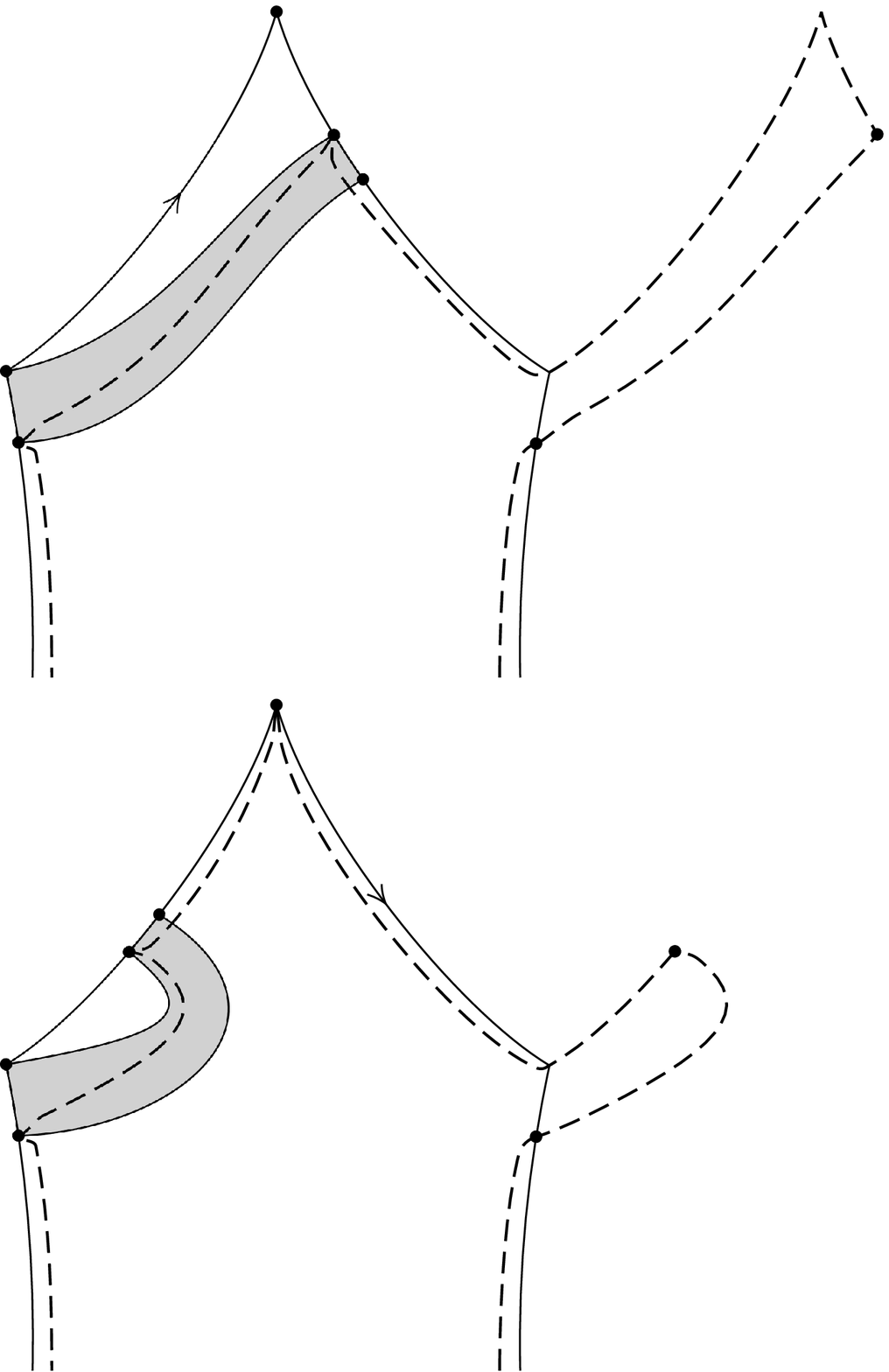}
		\hspace{25pt}
		\includegraphics[height=8cm]{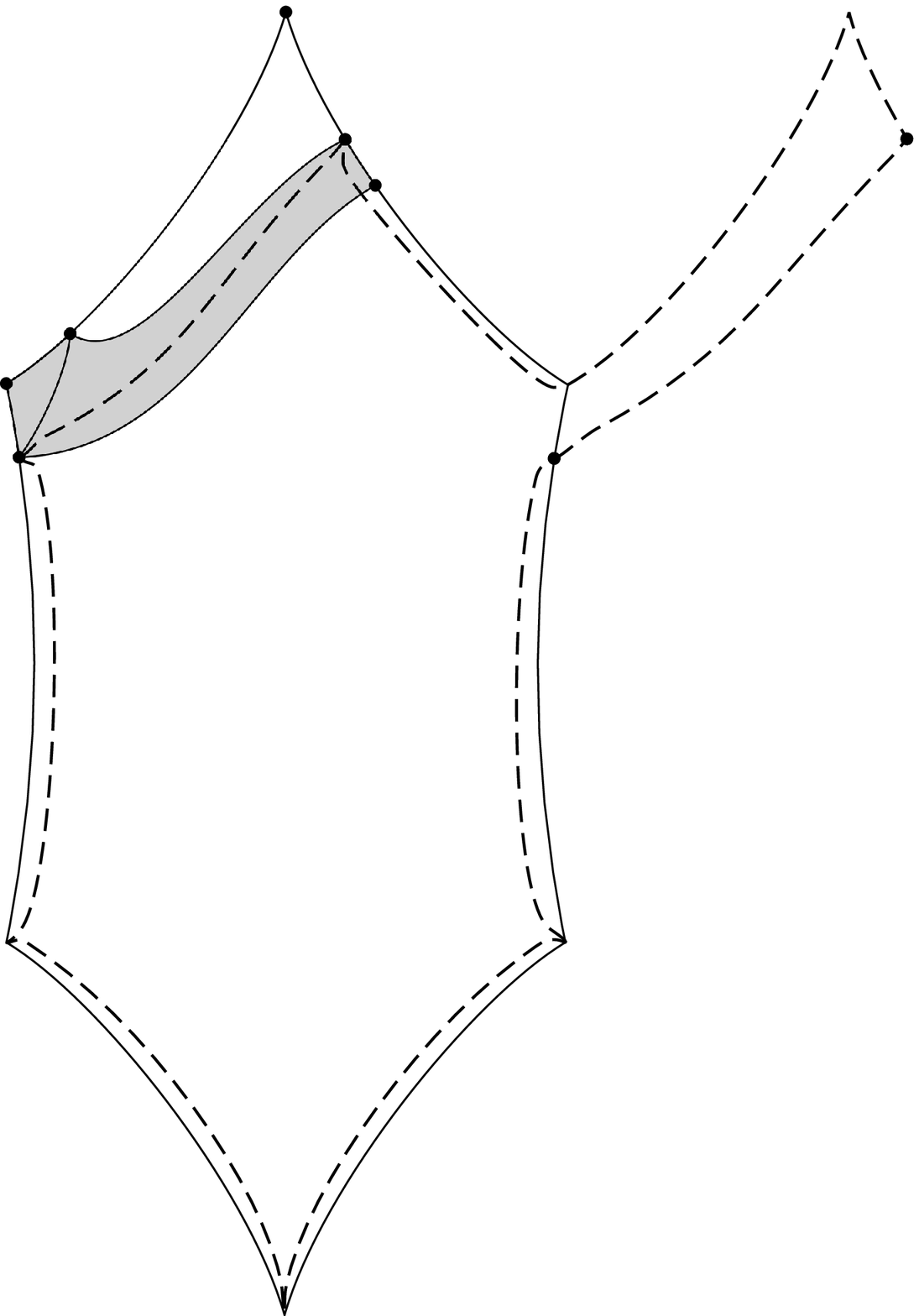}
		\caption{(a) Changing the last letter of the \pcconj{} and cyclically shifting $u$ enables us to assume that the edge $e$ is connected to the first edge of $p_2$. (b) If $f$ is connected to an edge in $p_1$ then we get a new hexagon realising a \pcconj{} for $u,v$ but where the side corresponding to $p_1$ is shorter.}\label{fig:cyclic chunk}
	\end{figure}
	
	Let $f$ be the edge of the interior of the corner chunk, see Figure \ref{fig:cyclic chunk} (b). If $f$ is isolated in the $7$--gon obtained by cutting out the corner chunk and treating this edge as a seventh side, then we get that $f$ has $X$--length bounded by $7D$ by Lemma \ref{lem:n-gon}. If $f$ is not isolated, the minimality of $p_1$ implies it cannot be connected to any edge of $p_1$ (see Figure \ref{fig:cyclic chunk}~(b)). Also, by Lemma \ref{lem:gen set lemma}~(4), if $f$ is connected to an edge of $p_2$ outside of the corner chunk, it must be the first such edge and again, by Lemma \ref{lem:gen set lemma}~(4) the two first edges of $p_2$ belong to different parabolic subgroups and therefore the $X$--length of $f$ is at most $m$ by Lemma \ref{lem:nocancellation}. 
	
	The Lemma follows by performing one cyclic permutation of $u$ and putting the label of $f$ as the last letter of $w$, leaving all other letters unchanged.  
\end{proof}

We have proved that for (quasi-)geodesic words $u$, $v$ that don't belong, up to cyclic permutation, to a parabolic subgroup, there is a \pcconj{} of bounded $X$--length. 

\begin{prop}\label{prop:PCL loxodromics}
Suppose that $u$ and $v$ are geodesic words such that no cyclic permutation of either is in a parabolic subgroup. Then
$$\PCL_{G,X}(u,v)\leq K$$ 
where $K$ is the constant $K=\big(8\delta+B+1+\abs{\ball_X(8\delta \max \{7D,m\})}\big) \max\{M,7D, m\}.$
\end{prop}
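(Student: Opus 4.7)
The plan is to take a \pcconj{} $w$ for $u,v$ of minimal $S$--length, write it as a word $s_1\dots s_n$ on $S$, and bound $\modulus{w}_X$ by bounding $n = \modulus{w}_S$ and the $X$--length of each letter $s_i$ separately. The proof is essentially an orchestration of the three technical lemmas \ref{lem:bounding_middle_conj}, \ref{lem:long hexagon is skinny}, and \ref{lem:bounding_chunks} established just above.

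First I would invoke Lemma \ref{lem:bounding_chunks} twice, using in turn the hypothesis that no cyclic permutation of $u$ lies in a parabolic subgroup (to control the last letter $s_n$) and the analogous hypothesis for $v$ (to control the first letter $s_1$). Each application replaces $w$ by another \pcconj{} of the same $S$--length with $\abs{s_1}_X, \abs{s_n}_X \leq \max\{m, 7D\}$. Since the first application only modifies $s_n$ and the second only modifies $s_1$, the two applications do not interfere, and the resulting word is still a \pcconj{} of minimal $S$--length.

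Next, Lemma \ref{lem:bounding_middle_conj} applies directly to this minimal \pcconj{}: for every $1 < i < n$ we may assume $\abs{s_i}_X \leq M$. To bound $n$ itself, I would invoke Lemma \ref{lem:long hexagon is skinny}, which gives
$$\modulus{w}_S \leq 8\delta + B + 1 + \abs{\ball_X(8\delta \max\{7D, m\})}.$$
(If $\modulus{w}_S \leq 8\delta+B+1$ then this bound holds trivially, since the ball has at least one element.)

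Combining the three estimates,
$$\modulus{w}_X \;\leq\; \sum_{i=1}^n \abs{s_i}_X \;\leq\; \modulus{w}_S \cdot \max\{M, 7D, m\} \;\leq\; K,$$
which is the required bound. The arguments are routine once the three lemmas are in hand; the only point requiring care is the compatibility of the two applications of Lemma \ref{lem:bounding_chunks} with the minimality assumption feeding into Lemma \ref{lem:bounding_middle_conj}, but this is immediate from the fact that Lemma \ref{lem:bounding_chunks} preserves $\modulus{w}_S$.
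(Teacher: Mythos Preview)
Your proof is correct and follows essentially the same strategy as the paper: bound $\modulus{w}_S$ via Lemma~\ref{lem:long hexagon is skinny}, then bound the $X$--length of each letter $s_i$ using Lemma~\ref{lem:bounding_middle_conj} for the interior letters and Lemma~\ref{lem:bounding_chunks} for the endpoints. Your additional remark about the compatibility of the two invocations of Lemma~\ref{lem:bounding_chunks} (each modifies only one end letter and preserves minimal $S$--length) is a helpful clarification that the paper leaves implicit.
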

\begin{proof}
Let $w$ be a \pcconj{} of $u$ and $v$ of minimal $S$--length.
By Lemma \ref{lem:long hexagon is skinny}, 
$|w|_S\leq 8\delta+1+B+|\ball_X(8\delta \max\{7D,m\})|.$ 
By Lemmas \ref{lem:bounding_middle_conj} and \ref{lem:bounding_chunks} the $X$--length of the letters of $w$ is bounded by $\max\{M,7D,m\}$.
\end{proof}
We note that Proposition \ref{prop:PCL loxodromics}  generalises  \cite[Theorem 9.13]{AC14} and \cite[Theorem 3.14]{Bumagin} which only apply to 
the case when $u$ and $v$ are cyclic (quasi--)geodesics.

In the case when $u$ (or $v$) has a cyclic permutation in some parabolic $H_\omega$, we want to use the $\PCL$ for $H_\omega$. In order to do that, we need that $u$ is a word in $H_\omega \cap X$. The following Lemma states that, after paying a fixed price, we can assume this is the case.

\begin{lem}\label{lem:changing words to be fully parabolic}
Suppose $u$ has a cyclic permutation in $H_\omega$. 
Then there exists a word $U$ over $X\cap H_\omega$, such that $\abs{U}_X\leq \abs{u}_X$ and
  $$\PCL_{G,X}(u,v)\leq \PCL_{G,X}(U,v)+\max\{3D,m\}.$$ 
\end{lem}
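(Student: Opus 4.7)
The plan is to define $U$ to be an $X$-geodesic representative of the cyclic permutation of $u$ lying in $H_\omega$, then build a \pcconj{} for $(u,v)$ by slightly modifying a minimal \pcconj{} for $(U,v)$, and finally to bound the length of this modification.

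Let $u'$ be the cyclic permutation of $u$ with $[u'] \in H_\omega$; write $u \equiv u_1u_2$ and $u' \equiv u_2u_1$. By Lemma~\ref{lem:gen set lemma}(3), every $X$-geodesic representative of $[u']$ is a word over $X \cap H_\omega$, so take $U$ to be any such geodesic. Then $|U|_X = |[u']|_X \leq |u'|_X = |u|_X$, which is the first assertion.

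Now let $W$ realise $\PCL_{G,X}(U,v)$ with $W[U_2U_1] = [v_2v_1]W$ for some splits $U \equiv U_1U_2$, $v \equiv v_1v_2$, and set $w := W[U_1]^{-1}$. Using $[U]=[u']$, a direct computation yields
\[
w[u'] = W[U_1]^{-1}[U] = W[U_2] = [v_2v_1]W[U_1]^{-1} = [v_2v_1]w,
\]
so $w$ is a \pcconj{} for $(u,v)$ witnessed by the cyclic permutations $u'$ of $u$ and $v_2v_1$ of $v$. Therefore
\[
\PCL_{G,X}(u,v) \leq |w|_X \leq |W|_X + |U_1|_X = \PCL_{G,X}(U,v) + |U_1|_X,
\]
reducing the problem to bounding $|U_1|_X$ by $\max\{3D,m\}$ for a well-chosen minimal $W$.

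For this bound I would analyse the hexagon $\cQ_U$ for $(U,v)$ in $\Gamma$, whose top two sides are the single $H_\omega$-edges $[U_2]$ and $[U_1]$ in the coset $WH_\omega$. Following the strategy of Lemma~\ref{lem:bounding_chunks}, the edge $[U_1]$ is examined according to whether it is isolated in a suitable subpolygon or connected to another $H_\omega$-edge of $\cQ_U$. In the isolated case, applying Lemma~\ref{lem:n-gon} to a triangle formed by $[U_1]$ and two quasi-geodesic sides yields $|U_1|_X \leq 3D$. If $[U_1]$ is connected via a common coset to an $H_\omega$-edge belonging to a distinct parabolic $H_\mu$, Lemma~\ref{lem:nocancellation} gives $|U_1|_X \leq m$. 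All other connectivity patterns produce a short-cut that strictly shortens $W$, contradicting its minimality (possibly after first replacing $W$ with another minimal \pcconj{} realising $\PCL_{G,X}(U,v)$). The main obstacle is to carry out this case analysis rigorously: since both top edges of $\cQ_U$ already lie in the same parabolic coset, the standard isolated-component arguments of earlier lemmas require adaptation.
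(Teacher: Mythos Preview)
Your reduction $\PCL_{G,X}(u,v)\le \PCL_{G,X}(U,v)+|U_1|_X$ is correct, but the crucial step---bounding $|U_1|_X$ by $\max\{3D,m\}$---is not established, and the sketch you give cannot be made to work as stated. In the hexagon $\cQ_U$ the two top edges $[U_1]$ and $[U_2]$ are \emph{always} $H_\omega$--components in the same coset $WH_\omega$, so $[U_1]$ is never isolated in $\cQ_U$; there is no ``isolated case'' and no triangle in $\cQ_U$ with $[U_1]$ as one side and two quasi-geodesic sides. Your invocation of Lemma~\ref{lem:nocancellation} is also off: connected components lie in the \emph{same} parabolic, so ``$[U_1]$ connected to an edge in a distinct $H_\mu$'' is not a case that arises. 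Finally, the freedom to replace $W$ by another minimal \pcconj{} does not help: the split $U\equiv U_1U_2$ can land anywhere in $U$, and nothing in the geometry of $\cQ_U$ forces it near an endpoint (think of the case $v\in H_\omega$, where $\cQ_U$ degenerates and the split is unconstrained).

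The paper's argument avoids this entirely by constructing $U$ from $u$ alone, with no reference to $v$ or to any \pcconj. One analyses the \emph{triangle} in $\Gamma$ with sides $\wh{u_2}$, $\wh{u_1}$, and the single $H_\omega$--edge $e$ labelled $h=u_2u_1$. If $e$ is isolated, $|h|_X\le 3D$ and any geodesic $U$ for $h$ is short enough that every prefix has length $\le 3D$. If $e$ is connected to the first edge $f$ of (say) $\wh{u_2}$, one takes $U$ to be the concatenation of the prefix $u_3$ of $u_2$ labelling $f$ (a genuine subword of $u$, over $X\cap H_\omega$) with a short geodesic for $g=[u_3]^{-1}h$, whose $X$--length is bounded via Lemma~\ref{lem:n-gon} or Lemma~\ref{lem:nocancellation}. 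The point is that this $U$ is built so that \emph{every} cyclic permutation of $U$ is conjugate to some cyclic permutation of $u$ by an element of length $\le\max\{3D,m\}$, which immediately gives the inequality for arbitrary $v$. Your $U$, being an arbitrary geodesic for $h$, has no such structural link to the cyclic permutations of $u$.
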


\begin{proof}
Let $u_1,u_2$  be subwords of $u$ such that $u \equiv u_1 u_2$ and $h=u_2u_1\in H_\omega$. 
Consider the triangle $p_2p_1e^{-1}$ with an edge path $e$  labelled by $h$,
 and paths $p_1$ and $p_2$
labelled by $\wh{u_1}$ and $\wh{u_2}$ respectively. By Lemma \ref{lem:gen set lemma}~(4), $p_1,p_2$ are quasi-geodesic.

If $e$ is isolated in the triangle, then,  by Lemma \ref{lem:n-gon},
$\abs{h}_X\leq 3D.$ By Lemma \ref{lem:gen set lemma}~(2), there is a word $U$ over $X\cap H_\omega$, such that $U=h$ and $\abs{U}_{X\cap H_\omega}=\abs{U}_X=\abs{h}_X$. Note that $u_2u_1$ is conjugated to any cyclic permutation of $U$ by a subword of $U$. Hence $\PCL(u,v)=\PCL(U,v)+3D$.

If $e$ is not isolated in the triangle then it is connected to one of the other sides, 
say $p_2$.

Recall that by Lemma \ref{lem:gen set lemma}~(4), no subpath of $p_1$ and $p_2$ of length greater than one has the same endpoints as an edge with label in $\cH$. Thus $e$ has to  be connected to the first edge of $p_2$, which we denote by $f$. Let $g$ be the label of the edge joining the distinct vertices of $e$ and $f$. Let $p_2'$ be the subpath of $p_2$ obtained by removing $f$.
See Figure \ref{fig:triangle}.

If the side labelled $g$ is isolated in the triangle with other sides $p'_2$ and $p_1$, then $\abs{g}_X\leq 3D$ by Lemma \ref{lem:n-gon}. 
If it is not isolated, then 
it cannot be connected to any other $H_\omega$--component since this 
would contradict the facts that adjacent letters of $\wh{u}$  do not come from the same parabolic subgroups (so it can't be connected to the last edge of $p_1$) and that no subpath of $p_1$ or $p_2$ 
of length greater than $1$ has the same endpoints as an edge with label in $\cH$. In this case  $\abs{g}_X\leq m$ by Lemma \ref{lem:nocancellation}.

	\begin{figure}[h!]
		\labellist
			\small \hair 2pt
			\pinlabel $f$ at 18 70
			\pinlabel $e$ at 123 24
			\pinlabel $g$ at 140 80
			\pinlabel $p'_2$ at 80 146
			\pinlabel $p_1$ at 183 126
		\endlabellist
		\includegraphics[width=5cm]{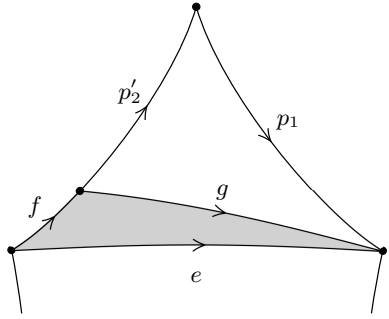}
		\caption{The component in the triangle $p_2p_1e^{-1}$ containing $e$.}\label{fig:triangle}
	\end{figure} 
	
To summarise, let $u_3$ be the prefix of $u_2$ which represents the label of the edge $f$. 
Note that $u_3$ is,  by definition,  a  geodesic word on $X\cap H_\omega$. 
Also by Lemma~\ref{lem:gen set lemma}~(2), there is a geodesic word $U_1$ on $X\cap H_\omega$ that is geodesic as a word over $X$ and represents $g$. 
We take  $U$ as the concatenation of $u_3$ and $U_1$. 
Any cyclic permutation of $U$ is conjugated by an element of $X$--length at most $\modulus{g}_X\leq\max\{3D,m\}$ to a cyclic permutation of $u$. The Lemma now follows.
\end{proof}

Thus we have sufficient information to deduce the main theorem.

\begin{proof}[Proof of Theorem 1]

We take $K$ as in Proposition \ref{prop:PCL loxodromics}. 
Let $u$ and $v$ be two geodesic words in $X$.
We can assume that $\modulus{u}_X,\modulus{v}_X\geq \max\{5D,m\}$ by increasing $K$, if necessary, so that
$\PCL_{G,X}(\max\{10D,2m\})\leq K$.
Suppose that no cyclic permutation of either $u$ or $v$ labels an element of a parabolic subgroup. Then, by Proposition \ref{prop:PCL loxodromics}, $ \PCL_{G,X}(u,v)\leq K.$

We have to consider the case when $u$ or $v$ has a cyclic permutation in a parabolic. By symmetry, it is enough to consider the case when $u$ can be cyclically permuted into a parabolic $H_\omega$. By  Lemma \ref{lem:changing words to be fully parabolic}, and increasing $K$ if necessary, we can assume that $u$ is in fact a word over $X\cap H_\omega$.

Let  $w\equiv s_1\dots s_n$ be a \pcconj{} of minimal $S$--length.  By Lemmas \ref{lem:bounding_middle_conj} and \ref{lem:long hexagon is skinny}, $n\leq 8\delta +B +1 +|\ball_X(8\delta\max\{7D,m\})|$ and $\modulus{s_i}_X\leq M$ for $i=2,\dots, n-1$. So we only need to bound $\modulus{s_i}_X$ for $i=1,n$.

In this situation, instead of the hexagon $\cQ$, we can consider the  (possibly degenerate) pentagon $\cQ'$ in $\ga$, where the sides from $w$ to $wu_2$ and from $wu_2$ to $wu'$ are substituted by an edge from $w$ to $wu'$ (see Figure \ref{fig:pentagon}(a)). Call this edge $e$.
	
	\begin{figure}[h!]
		\labellist
			\small \hair 2pt
			\pinlabel $1$ [r] at 4 163
			\pinlabel $w$ [r] at 4 403
			\pinlabel $wu'=v'w$ [b] at 244 404
			\pinlabel $v'$ [l] at 246 165
			\pinlabel $v_2$ [t] at 124 1
			\pinlabel $e$ [b] at 125 403
			\pinlabel $e$ [b] at 445 403
			\pinlabel $e'$ [t] at 445 370
			\pinlabel $w_{n-1}$ [r] at 333 366
			\pinlabel $v'w_{n-1}$ [l] at 562 366
			\pinlabel $\textrm{(a)}$ at 122.5 454
			\pinlabel $\textrm{(b)}$ at 445 454
			\endlabellist
			\vspace{15pt}
		\includegraphics[height=5cm]{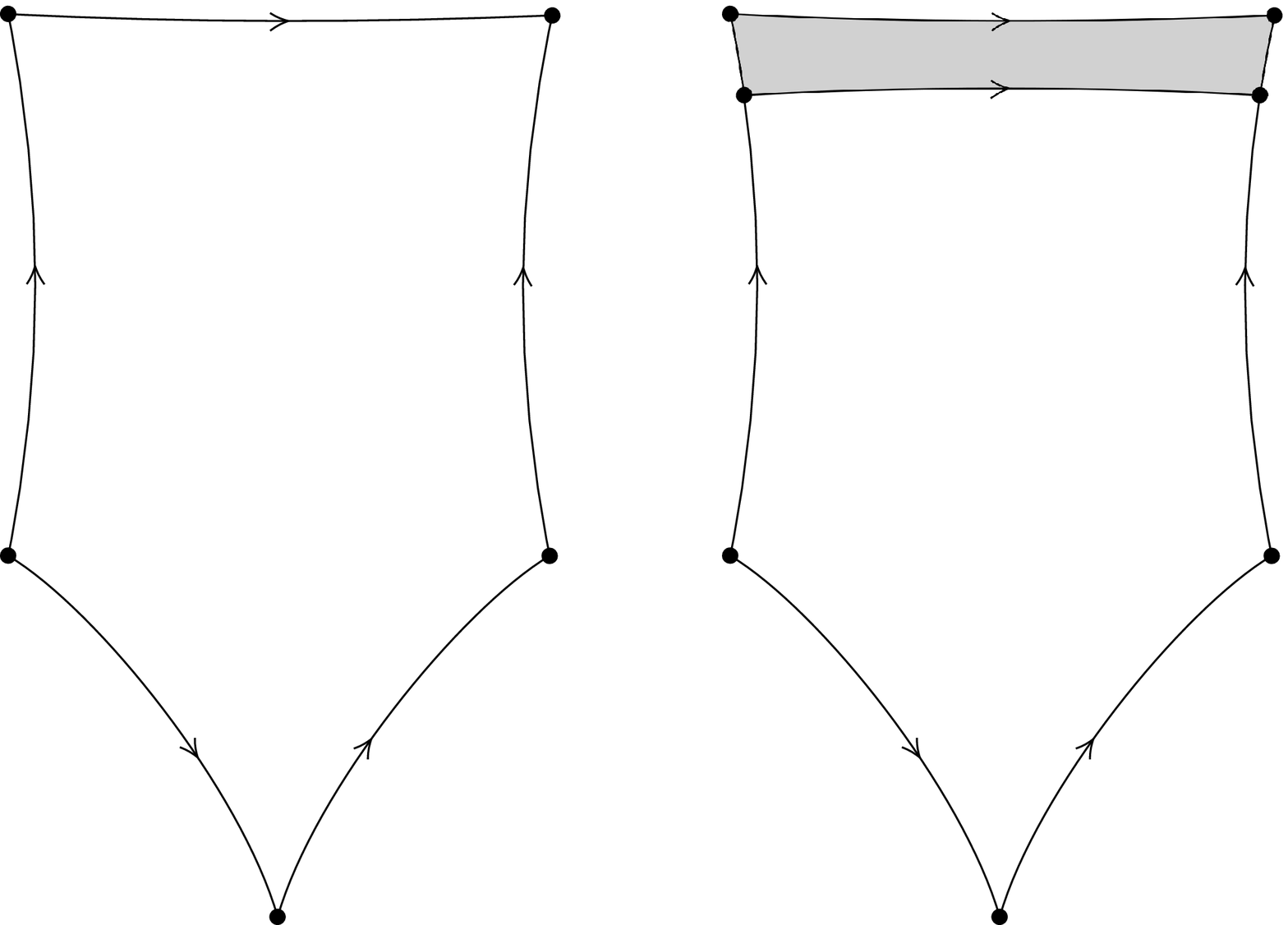}
		\caption{The pentagon $\cQ'$.}\label{fig:pentagon}
	\end{figure}
{\bf Case 1.} Suppose that $v$ is not, up to cyclic permutation in a parabolic.

By Lemma \ref{lem:bounding_chunks}, {we may assume that} $\modulus{s_1}_X\leq \max\{m,7D\}$, hence we need to bound $\modulus{s_n}_X$ in the case that $n>1$.

If the $s_n$--edge of the path from $1$ to $w$ is isolated, then $\modulus{s_n}_X\leq 6D$. If this is not the case, the argument of Figure \ref{fig:more short cuts} shows that it has to be either connected to the edge $e$ or to the $s_n$--edge in the opposite side. In both cases, we get a horizontal band. Let $e'$ denote the lower edge of the band (see Figure \ref{fig:pentagon}(b)).
Let $\cP$ be the ``subpentagon'' of $\cQ'$ obtained by deleting the band.
We remark that  if  $e'$ is not isolated and is of $X$-length greater than $m$, we would contradict the minimality of $\modulus{w}_S$. Therefore, by Lemma \ref{lem:n-gon}, $|e'|_X\leq \max\{5D,m\}$. Hence we may assume that $$|s_n|_X\leq \PCL_{H_\omega, X\cap H_\omega}(|u|_X+5D)+\max\{5D,m\}.$$ Since we are assuming that $|v|_X> \max\{5D,m\}$,
we have that $$|s_n|_X\leq \PCL_{H_\omega, X\cap H_\omega}(n)+ \max\{5D,m\}.$$
The result now follows in this case.

{\bf Case 2.} Suppose now that some cyclic permutation of $v$ is in a parabolic $H_\mu$. Again, by Lemma \ref{lem:changing words to be fully parabolic}, and increasing $K$ if necessary, we can assume that $v$ is a word in $H_\mu \cap X$.

If $n>1$, the previous case shows that we can assume that $$\modulus{s_n}_X\leq \PCL_{H_\omega, X\cap H_\omega}(|u|_X+\max\{5D,m\})+\max\{5D,m\}$$ and  $$\modulus{s_1}_X\leq \PCL_{H_\omega, X\cap H_\omega}(|v|_X+\max\{5D,m\})+\max\{5D,m\}.$$

Finally,  suppose $n=1$. 
If the side from $1$ to $w$ is isolated in $\cQ '$ (which is now a quadrilateral),  
then $\modulus{w}_X\leq 4D$. 
If it is not, then $H_\omega=H_\mu$ and $w\in H_\omega$. {In this case, we can take $w$ so that} $|w|_X\leq \PCL_{H_\omega,X\cap H_\omega}(|u|_X+|v|_X)$.
\end{proof}

\end{document}